%
%
%
%
\documentclass{amsart}


\usepackage{graphicx}

\usepackage{xcolor}

\usepackage{amsmath, amssymb, amsfonts, amsthm, fullpage}
\usepackage{graphics,graphicx}
\usepackage{accents}
\usepackage{color}

\usepackage{float} 

\renewcommand{\div}{\mathrm{div}}
\newcommand{\curl}{\mathrm{curl}}

\newcommand{\R}{\mathbb{R}}

\newcommand{\norma}[1]{{\left\vert\kern-0.25ex\left\vert\kern-0.25ex\left\vert #1 
    \right\vert\kern-0.25ex\right\vert\kern-0.25ex\right\vert}}

\newcommand{\be}{\mathbf{e}}    
\newcommand{\bx}{\mathbf{x}}
\newcommand{\bn}{\mathbf{n}}
\newcommand{\bu}{\mathbf{u}}
\newcommand{\bw}{\mathbf{w}}
\newcommand{\bz}{\mathbf{z}}
\newcommand{\bv}{\mathbf{v}}

\newcommand{\bH}{\mathbf{H}}

\newcommand{\bR}{\mathbf{R}}
\newcommand{\bS}{\mathbf{S}}
\newcommand{\bL}{\mathbf{L}}
\newcommand{\bI}{\mathbf{I}}
\newcommand{\bphi}{\boldsymbol{\phi}}
\newcommand{\bchi}{\boldsymbol{\chi}}
\newcommand{\bpsi}{\boldsymbol{\psi}}
\newcommand{\bzeta}{\boldsymbol{\zeta}}
\newcommand{\btau}{\boldsymbol{\tau}}
\newcommand{\bxi}{\boldsymbol{\xi}}

\newcommand{\Div}{\nabla\!\cdot\!}
\newcommand{\Curl}{\nabla\!\times\!}

\newcommand{\tbn}[1]{{\left\vert\kern-0.25ex\left\vert\kern-0.25ex\left\vert #1 \right\vert\kern-0.25ex\right\vert\kern-0.25ex\right\vert}}

\newtheorem{remark}{Remark}[section]
\newtheorem{lemma}{Lemma}[section]
\newtheorem{proposition}{Proposition}[section]
\newtheorem{theorem}{Theorem}[section]

\AtBeginDocument{%
  \setlength{\belowdisplayskip}{5pt} \setlength{\belowdisplayshortskip}{5pt}
  \setlength{\abovedisplayskip}{5pt} \setlength{\abovedisplayshortskip}{5pt}
}

\begin{document}

\title[A regularized shallow-water waves system with slip-wall boundary conditions in a basin]{A regularized shallow-water waves system with slip-wall boundary conditions in a basin: Theory and numerical analysis}

\author{Samer Israwi}
\address{\textbf{S.~Israwi:} Department of Mathematics, Faculty of Sciences 1,
Lebanese University, Beirut, Lebanon
}
\email{s\_israwi83@hotmail.com}

\author{Henrik Kalisch}
\address{\textbf{H.~Kalisch:} Department of Mathematics, University of Bergen, Postbox 7800, 5020 Bergen, Norway}
\email{henrik.kalisch@uib.no}

\author{Theodoros Katsaounis}
\address{\textbf{Th.~Katsaounis:}   Dept. of Math. and Applied Mathematics, Univ. of Crete, Heraklion, Greece \\ \& IACM, FORTH, Heraklion, Greece \\ \& CEMSE, KAUST, Thuwal, Saudi Arabia}
\email{theodoros.katsaounis@kaust.edu.sa}

\author{Dimitrios Mitsotakis}
\address{\textbf{D.~Mitsotakis:} Victoria University of Wellington, School of Mathematics and Statistics, PO Box 600, Wellington 6140, New Zealand}
\email{dimitrios.mitsotakis@vuw.ac.nz}



\subjclass[2000]{35Q35, 74J30, 92C35}

\date{\today}


\keywords{Regularised shallow water equations, BBM-BBM system, solitary waves, Galerkin/Finite element method, Nitsche's method, initial-boundary value problem}

\begin{abstract}
The simulation of long, nonlinear dispersive waves in bounded domains usually requires the use 
of slip-wall boundary conditions. Boussinesq systems appearing in the literature are generally
not well-posed when such boundary conditions are imposed, or if they are well-posed it is very 
cumbersome to implement the boundary conditions in numerical approximations. 

In the present paper a new Boussinesq system is proposed for the study of long waves of small amplitude 
in a basin when slip-wall boundary conditions are required. The new system is derived using asymptotic 
techniques under the assumption of small bathymetric variations, and
a mathematical proof of well-posedness for the new system is developed. 

The new system is also solved numerically using a Galerkin finite-element method,
where the boundary conditions are imposed with the help of Nitsche's method. 
Convergence of the numerical method is analyzed, and precise error estimates are provided. 
The method is then implemented, and the convergence is verified using numerical experiments. 
Numerical simulations for solitary waves shoaling on a plane slope are also presented. The results
are compared to experimental data, and excellent agreement is found.
\end{abstract}

\maketitle

\section{Introduction}
In this work, attention is given to a new model system for the study of long waves
of small amplitude at the free surface of a perfect fluid. The system can be used
in the presence of non-constant bathymetry and lateral boundaries. The main
new feature of the system is that it is straightforward to implement slip-wall
boundary conditions on a finite domain. The system falls in the general class
of Boussinesq systems which have become standard tools in the study of nearshore
hydrodynamics. 

While the full water-wave problem is described by the Euler equations
with free surface boundary conditions \cite{Whitham2011}, it is well known that this problem is difficult 
to treat both mathematically and numerically. 
In particular, it is not known whether solutions exist on relevant time scales,
and numerical simulations of the full water-wave problem 
may suffer from serious stability issues.
Therefore, in practical situations in coastal hydrodynamics, 
asymptotic approximations of the Euler equations are often used
to find simpler systems that describe the main features of the flow. 
These simplified systems are usually derived using the long-wave assumption. 
The simplest in structure of such long wave systems are the shallow-water wave equations 
(or Saint-Venant equations) which take the form 
\begin{equation}
\label{eq:sweqs}
\begin{aligned}
&\eta_t+\Div[(D+\eta)\bu]=0\ , \\
&{\bf u}_t+g\nabla\eta+(\bu\cdot \nabla)\bu=0\ , 
\end{aligned}
\end{equation}
where for the space variable $\bx=(x,y)\in \mathbb{R}^2$ and time $t\geq0$ the functions $\eta=\eta(\bx,t)$ and $\bu=\bu(\bx,t)$ denote the free surface elevation and  the depth-averaged horizontal 
velocity of the fluid, respectively. The function $D=D(\bx)>0$ represents the distance of the ocean floor from the undisturbed level of free-surface elevation, while in the previous notation $g$ is the gravitational acceleration constant. 
Since this system is hyperbolic, there is a number of well developed methods for 
the approximation of solutions such as TVD methods, Riemann solvers etc, \cite{leveque2002,toro2013},
and it is well known that the shallow-water system is most of the times able to describe
the propagation of tsunamis and flood waves.
It is also well known that smooth solutions of (\ref{eq:sweqs}) 
preserve the energy functional 
\begin{equation}\label{eq:firstenrg}
E(t)=\int g\eta^2+(D+\eta)|\bu|^2~ d\bx\ ,
\end{equation}
which is an approximation of the total energy satisfied by the solutions of the Euler equations. 
Although the shallow-water system has favorable properties and is widely used, 
it is restricted to the modeling of very long waves, and is not suitable for the description
of coastal phenomena such as solitary waves or periodic wave-trains.

In a seminal contribution, D.H. Peregrine in \cite{Per67} resolved this issue by deriving a 
Boussinesq-type system applicable to coastal wave phenomena such as shoaling solitary waves,
wave reflection and long-shore currents to name just a few.
The Peregrine system is written in dimensional form as
\begin{equation}
\label{eq:Peregrin}
\begin{aligned}
&\eta_t+\Div[(D+\eta)\bu]=0\ , \\
&{\bf u}_t+g\nabla\eta+(\bu\cdot \nabla)\bu -\frac{1}{2}D\nabla(\Div(D\bu_t))+\frac{1}{6}D^2\nabla(\Div\bu_t)=0\ ,
\end{aligned}
\end{equation}
and describes the propagation of water waves over a bottom topography 
$D=D(\bx)$ ($\bx=(x,y)\in \mathbb{R}^2$) with free surface elevation 
$\eta=\eta(\bx,t)$ and a depth-averaged horizontal velocity field $\bu=\bu(\bx,t)$. 

The first equation in Peregrine's system (\ref{eq:Peregrin}) is the exact expression 
of the mass conservation, and is derived from the kinematic free-surface boundary condition.
The second equation is derived from the dynamic boundary condition. 
Although Peregrine's system looks very convenient due to its simplicity, 
it appears to have several drawbacks in relation to existence and uniqueness of solutions
and numerical discretization. 
Indeed, it has only recently been proved that the Cauchy problem for the Peregrine system (\ref{eq:Peregrin}) 
is well-posed in $\mathbb{R}^2$ \cite{DuchIsr2018}, and it is still unknown whether 
the system is well-posed in bounded domains $\Omega\subset \mathbb{R}^2$. Note that solutions to Peregrine system do not satisfy any reasonable approximation to the total energy functional such as (\ref{eq:firstenrg}).
Moreover, as it was shown in \cite{KMS2019}, numerical discretization of 
Peregrine's system in bounded domains can yield suboptimal convergence rates 
and also low resolution phenomena (i.e. aliasing phenomena)
due to its hyperbolic form of the mass conservation.

Several other Boussinesq-type systems with certain favorable properties have been derived as alternatives to 
Peregrine's system. One such example, which is of central focus in the present paper, is the class of 
the BBM-BBM type systems. 
These systems were first introduced in \cite{BS1976,BC1998,BCS2002} 
in one dimension and later in \cite{BCL2015} in two dimensions, 
and they agree asymptotically with the Euler equations in the long-wave small-amplitude regime.
In particular in \cite{BCL2015} a BBM-BBM system of the form
\begin{equation} \label{eq:BBMsys}
\begin{aligned}
&\eta_t+\Div \bu +\Div(\eta \bu) -\frac{1}{6} \Delta \eta_t = 0 \ , \\
& \bu_t+g\nabla \eta+\frac{1}{2}\nabla |\bu|^2-\frac{1}{6} \Delta \bu_t= 0\ . 
\end{aligned}
\end{equation}
was derived in the case of a flat bottom, and a generalization of this system 
to the case of general topography was presented in \cite{mits2009}. Such systems can be used for the description of the generation and propagation of tsunamis, among other nonlinear and dispersive waves, \cite{Cumb1987,mits2009}. They are also very robust even in the presence of variable bottom topographic environments \cite{Senth2016}, (with some exceptions such as the KdV-KdV system which is asymptotically equivalent to the BBM-BBM but cannot serve its purpose due to physical inconsistencies, \cite{BDM2007i,BDM2008ii}).

The main characteristic of these systems is the presence of a dispersive term of mixed type,  
involving two space derivatives and one time derivative in both equations, as opposed to
the Peregrine system which features this term only in one of the two equations.
The idea of using mixed-derivative terms goes back to Peregrine \cite{Per66},
and the single KdV-type equation with a mixed-derivative term has become known as the
BBM equation \cite{BBM}. In the context of BBM-type systems, the inclusion of
the mixed-derivative term in the first equation has two drawbacks. First, the first equation is no
longer an exact mass conservation equation, and the mass balance now takes an approximate form
\cite{AlK2012}. However, mass is still conserved to within the order of approximation,
so this is not a serious problem. Secondly, the dispersion relation for the linearized
equation is slightly less accurate than the dispersion relation of the Peregrine system \cite{BCS2002}.  
This drawback can be mitigated by including higher-order dispersive terms which is the approach
followed in the present contribution. 

While the presence of the Laplace operator in the dispersive terms of the mass and momentum equations 
in this BBM-BBM system appears to be attractive from the point of view of mathematical analysis
and numerical discretization (such as well-posedness in the Hadamard sense and optimal convergence rates of numerical solutions), 
the initial value problem with wall boundary conditions for these kind of  systems  
in bounded domains requires Dirichlet boundary conditions for the velocity field 
on the boundary of the domain, \cite{DMS2009}, in addition to homogenous Neumann boundary conditions 
for the free surface $\eta$. The Dirichlet boundary condition for modeling walls needs to be zero in the direction of the unit normal vector of the boundary $\bu\cdot\bn=0$. On the other hand, the inversion of the operator $(I-\Delta)$ requires also information about the tangential component of the velocity on the wall $\bu\cdot\btau$, 
which is generally not available. For this reason,  this term is usually taken as $\bu\cdot\btau=0$ which results in overall zero Dirichlet boundary conditions $\bu=(\bu\cdot\bn)\bn+ (\bu\cdot\btau)\btau=0$.These boundary conditions are essentially no-slip wall boundary conditions and are quite restrictive, 
especially when one considers obstacles or other complicated boundaries of the numerical domain.

In order to address this problem, a new BBM-BBM type system suitable 
for slip-wall boundary conditions was recently proposed \cite{KMS2019}.
The system is written in dimensional variables as
\begin{equation}
\label{eq:nper3}
\begin{aligned}
& \eta_t+\Div((D+\eta)\bu)-\tilde{b}\Div[D^2(\nabla(\nabla D\cdot \bu)+\nabla D\Div \bu)]- (\tilde{a}+\tilde{b})\Div [D^2 \nabla \eta_t]=0\ , \\
& \bu_t+g\nabla\eta+ \frac{1}{2}\nabla|\bu|^2+[\tilde{c}D\nabla(\Div(D\bu_t))+\tilde{d}D^2\nabla(\Div\bu_t)]=0\ ,
\end{aligned}
\end{equation}
where
\begin{equation}\label{eq:coefsbous}
\tilde{a}=\theta-1/2,~ \tilde{b}=1/2[(\theta-1)^2-1/3],~ \tilde{c}=\theta-1~ \mbox{ and }~\tilde{d}=1/2(\theta-1)^2\ ,
\end{equation}
for $\theta\in [0,1]$. 
Here, $\bu$ denotes the horizontal velocity field at height $z=-D+\theta(\eta+D)$ above the bottom, 
instead of the depth-averaged horizontal velocity used in the Peregrine system (\ref{eq:Peregrin}). 
For $\theta=\sqrt{2/3}$ the BBM-BBM-type system of \cite{BCL2015} 
is recovered but with different dispersive terms in the second equation. 

By considering mild bottom topography in (\ref{eq:nper3}) with $\theta=\sqrt{2/3}$ one can obtain the system
\begin{equation}\label{newsys}
\begin{aligned}
& \eta_t+\Div((D+\eta)\bu) - \frac{1}{6} \Div (D^2 \nabla \eta_t)=0\ ,  \\
& \bu_t+g\nabla\eta+ \frac{1}{2}\nabla|\bu|^2-\frac{1}{6}\nabla(\Div(D^2\bu_t))=0\ . 
\end{aligned}
\end{equation}
The small bottom variations assumption on which this model is based was used before in \cite{Chen03,BL2009} for the derivation of simple equations with variable bottom topography. For a formal definition of the current in the presence of mild topography, see Section 3.2.
Note that the term $\Delta \bu_t$ of (\ref{eq:BBMsys}) has been replaced by the term $\nabla(\Div\ \bu_t)$ in the momentum equation of (\ref{newsys}). This new term allows the use of slip-wall boundary conditions when the problem is posed in bounded domains. The particular system, as we shall see later, appears to have certain advantages compared to other Boussinesq systems of water wave theory: (i) System (\ref{newsys}) is well-posed in bounded domains with slip-wall boundary conditions at least locally in time; (ii) has simple structure and preserves the same energy functional as its non-dispersive counterpart; (iii) its numerical discretization by Galerkin/Finite element method results in stable simulations.  These advantages are crucial since they are related to Newton's principle of determinacy of physically sound problem, a principle that all deterministic systems must obey. The present paper is devoted to the analysis of system (\ref{newsys}) and its Galerkin approximations. 

Specifically, we give a detailed explanation why the system (\ref{newsys})
is attractive for the study of shallow water waves.
For the derivation of the new system  we follow two different approaches: 
The first one is based on the classical asymptotic method taking as point of departure 
the full water-wave problem based on the Euler equations.  
In the derivation, we pay special attention to incorporate appropriate dispersive terms 
which yield the correct behavior in terms of energy conservation.  
As a consequence, the new system features energy conservation in a similar fashion as the Euler equations. 
In particular, the solutions of the new regularized system preserve the exact same energy 
as its non-dispersive counterpart, namely, the shallow-water waves system.  
Furthermore, we present an alternative derivation based on variational principles. 
This approach is quite attractive not only for its simplicity, 
but also for obtaining physical properties in a straightforward manner. 
Although the new system is derived with the assumption of the mild bottom topography, 
it will be shown in Section \ref{sec:numerics} 
that it is appeared to be valid even for more general bottom topographies.

Furthermore, we explore the theoretical background of 
(\ref{newsys})
insofar as it concerns the initial-boundary value 
problem in a bounded domain with slip-wall boundary conditions. 
These boundary conditions are necessary to describe water waves propagating 
in a closed basin, and in general to describe interactions of waves with solid walls. 
The initial-boundary value problem of the new system with slip-wall boundary conditions 
appears to have similar well-posedness properties with the classical BBM-BBM system studied in \cite{DMS2009}.

For numerical approximations we apply to the new system the Galerkin/Finite element method of \cite{KMS2019}. 
Due to the difficulty of incorporating the exact boundary conditions into the finite element space, 
we resort to applying the Nitsche method \cite{Nitsche}. This method is commonly used 
in practical problems but is rarely analyzed mathematically. Building on previous work in \cite{KMS2019}, 
we prove that the numerical solution converges to the exact solution.
These results are verified with actual numerical computations, and it is also shown
that at least in some cases the optimal rate of convergence is achieved.

The paper is organized as follows: 
First we present the derivation of the system using the two approaches in Section \ref{sec:derivation}. 
In Section \ref{sec:wellposed} we study the well-posedness 
of the specific initial-boundary value problem, 
a necessary ingredient for the justification of systems of modeling equations. 
The application of the finite element method for the discretization of the new system, 
its convergence and accuracy are presented in Section \ref{sec:numsols}. 
Finally, in Section \ref{sec:numerics} we consider several 
numerical experiments verifying the theoretical findings 
and demonstrating the applicability of the numerical method.

\section{Derivation of the new system}\label{sec:derivation}

In this section we present the derivation of the new system based on a classical asymptotic approach.  Furthermore,  we present a novel alternative derivation based on variational methods.

\subsection{Asymptotic reasoning}

In what follows we consider characteristic quantities for  typical waves in the Boussinesq regime, in particular a typical wave amplitude $a_0$ and length $\lambda_0$ and a typical constant depth $D_0$. We will denote the linear wave speed by $c_0=\sqrt{gD_0}$. The bottom topography is defined as $D=D_0+ D_b$ where $D_b$ is the typical deviation from the typical depth $D_0$. We also define the order of bottom topography variations $d_0$, and the dimensionless variables 
\begin{equation}
\tilde{\bf x}=\frac{{\bf x}}{\lambda_0},\qquad  \tilde{t}= \frac{c_0}{\lambda_0}t, \qquad \tilde{\bf u} = \frac{D_0}{a_0c_0}{\bf u}, \qquad \tilde{\eta}=\frac{\eta}{a_0}, \qquad \tilde{D}_b=\frac{D_b}{d_0}\ .
\end{equation} 
Then the BBM-BBM system \eqref{eq:nper3} can be written in the nondimensional 
and scaled form as
\begin{equation}
 \label{eq:prgnd}
\begin{aligned}
& \tilde{\eta}_t+\Div((1+\beta\tilde{D}_b+\varepsilon\tilde{\eta})\tilde{\bu}) -\sigma^2\tilde{b}\Div[(1+\beta \tilde{D_b})^2(\nabla(\nabla (1+\beta \tilde{D_b})\cdot \bu)+\nabla (1+\beta \tilde{D_b})\Div \tilde{\bu})] \\
&- \sigma^2(\tilde{a}+\tilde{b})\Div [(1+\beta \tilde{D_b})^2 \nabla \eta_t]=O(\varepsilon\sigma^2,\sigma^4)\ , \\
& \tilde{\bu}_t+\nabla\tilde{\eta}+ \varepsilon\frac{1}{2}\nabla|\tilde{\bu}|^2+\sigma^2[\tilde{c}(1+\beta \tilde{D_b})\nabla(\Div((1+\beta \tilde{D_b})\tilde{\bu}_t))+\tilde{d}(1+\beta \tilde{D_b})^2\nabla(\Div\tilde{\bu}_t)]=O(\varepsilon\sigma^2,\sigma^4)\ ,
\end{aligned}
\end{equation}
where the parameter $\varepsilon=\alpha_0/D_0$, $\sigma=D_0/\lambda_0$ and $\beta=d_0/D_0$ 
are all assumed to be positive and small: $0<\varepsilon,\sigma,\beta\ll 1 $.
The error terms appearing on the right hand side of the various equations comprise 
terms emerging from asymptotic expansions of the horizontal velocity.
Note that (\ref{eq:prgnd}) can be derived from the Euler system with the free surface boundary conditions 
in the same way as Peregrine's system (\ref{eq:Peregrin}) but by specifying the horizontal velocity of the fluid at certain depth
as the second dependent variable instead of the average horizontal velocity which is used
as dependent variable in the Peregrine system. Notice also that it is crucial to use the approximate 
irrotationality condition $\Curl \tilde{\bu}=O(\sigma^2)$, \cite{mits2009} in the derivation. 
This condition is also hidden behind the derivation of (\ref{eq:Peregrin}) 
and is a crucial component of the derivation of (\ref{eq:BBMsys}), 
indicating that irrotationality is an unavoidable component of Boussinesq systems.
Moreover, assuming that terms of $O(\beta\sigma^2)$ are negligible, \cite{Chen03}, 
the BBM-BBM system (\ref{eq:prgnd}) can be further simplified to 
\begin{equation}
 \label{eq:prgndlow}
\begin{aligned}
& \tilde{\eta}_t+\Div((1+\beta\tilde{D}_b+\varepsilon\tilde{\eta})\tilde{\bu}) - \sigma^2(\tilde{a}+\tilde{b})\Div ( \nabla \tilde{\eta}_t)=O(\varepsilon\sigma^2,\sigma^4,\beta\sigma^2)\ , \\
& \tilde{\bu}_t+\nabla\tilde{\eta}+ \varepsilon\frac{1}{2}\nabla|\tilde{\bu}|^2+\sigma^2(\tilde{c}+\tilde{d})\nabla(\Div\tilde{\bu}_t)=O(\varepsilon\sigma^2,\sigma^4,\beta\sigma^2)\ ,
\end{aligned}
\end{equation}
or in dimensional form, and after discarding the high-order terms
\begin{equation}
 \label{eq:prgndlowb}
\begin{aligned}
& \eta_t+\Div((D+\eta)\bu) - (\tilde{a}+\tilde{b})D_0^2\Div ( \nabla \eta_t)=0\ , \\
& \bu_t+g\nabla\eta + \frac{1}{2}\nabla|\bu|^2+(\tilde{c}+\tilde{d})D_0^2\nabla(\Div\bu_t)=0\ .
\end{aligned}
\end{equation}
We will refer to this system as simplified BBM-BBM system, which is a generalization 
of the analogous one-dimensional BBM-BBM system derived in \cite{Chen03}.
It is easily seen that the bottom variations practically do not contribute at all in the dispersive terms. 
As we shall see also later in Section \ref{sec:numerics}, such simplifications diminish the accuracy of the model
and make it inappropriate for practical applications such as the shoaling of solitary waves, 
even in the cases where the slope of the seafloor is mild.
On the other hand, keeping otherwise negligible high-order terms of $O(\beta\sigma^2)$ in the dispersive terms and taking the advantage of the fact that $\sigma^2(1+\beta\tilde{D}_b)\approx \sigma^2+O(\beta\sigma^2)$ to place the term $D$ at a position that ensures energy conservation, we obtain from (\ref{eq:prgnd}) the system
\begin{equation}
 \label{eq:prgnd2}
\begin{aligned}
& \tilde{\eta}_t+\Div((1+\beta\tilde{D}_b+\varepsilon\tilde{\eta})\tilde{\bu}) - \sigma^2(\tilde{a}+\tilde{b})\Div ((1+\beta \tilde{D_b})^2 \nabla \tilde{\eta}_t)=O(\varepsilon\sigma^2,\sigma^4,\beta\sigma^2)\ , \\
& \tilde{\bu}_t+\nabla\tilde{\eta}+ \varepsilon\frac{1}{2}\nabla|\tilde{\bu}|^2+\sigma^2(\tilde{c}+\tilde{d})(1+\beta \tilde{D_b})\nabla(\Div((1+\beta \tilde{D_b})\tilde{\bu}_t))=O(\varepsilon\sigma^2,\sigma^4,\beta\sigma^2)\ .
\end{aligned}
\end{equation}
As we shall see later, solutions of such a system can
preserve the same energy functional as the non-dispersive shallow water equations. 
Numerical experiments have shown that keeping topography variations in the high-order dispersive terms 
extends the validity of the model in practical problems such as the shoaling of 
long water waves over general bottoms.  Moreover, the model is more realistic since 
the actual bottom topography function $D$ appears in the equations 
instead of the typical depth $D_0$ (see e.g. \cite{Lannes13}).
The asymptotic equivalence of the equations with $D_0$ and $D$ 
enables us to reformulate them appropriately so that the resulting system will be Hamiltonian. 
For example, after neglecting the high-order terms and using dimensional variables 
the system (\ref{eq:prgnd2}) can be written as
\begin{equation}
 \label{eq:bbm}
\begin{aligned}
& \eta_t+\Div((D+\eta)\bu) - (\tilde{a}+\tilde{b}) \Div (D^2 \nabla \eta_t)=0\ , \\
& \bu_t+g\nabla\eta+ \frac{1}{2}\nabla|\bu|^2+(\tilde{c}+\tilde{d})\nabla(\Div(D^2\bu_t))=0\ .
\end{aligned}
\end{equation}
For the sake of completeness we present also the extension of (\ref{eq:bbm}) with moving bottom topography. Such systems can be useful in the studies of water waves generated by moving bottoms such as tsunamis \cite{DMCS12,DMGD13,Lannes13,mits2009}. Assuming moving bottom topographic features described 
by a bottom function of the form $D(\bx)+\zeta(\bx,t)$ where $\zeta$ has a 
typical magnitude of $O(a_0)$, the system (\ref{eq:bbm}) is written as
\begin{equation}
 \label{eq:bbmvb}
\begin{aligned}
& \eta_t+\Div((D+\zeta+\eta)\bu) - (\tilde{a}+\tilde{b})\Div (D^2 \nabla \eta_t)=-\tilde{a}\Div(D^2\nabla\zeta_t)-\zeta_t\ , \\
& \bu_t+g\nabla\eta+ \frac{1}{2}\nabla|\bu|^2+(\tilde{c}+\tilde{d})\nabla(\Div(D^2\bu_t))=-\tilde{c}D\nabla\zeta_{tt}\ .
\end{aligned}
\end{equation}

In this paper we will consider the system (\ref{eq:bbm}) 
in the case where $\theta=\sqrt{2/3}$ (i.e. $\tilde{a}+\tilde{b}=-\tilde{c}-\tilde{d}=1/6$) 
in a bounded domain $\Omega\subset \R^2$ with slip wall boundary conditions 
of the form $\nabla\eta\cdot \bn=0$ and $\bu\cdot \bn=0$ on the boundary $\partial\Omega$, 
where $\bn$ is the external unit normal vector to the boundary. 
We rewrite the BBM-BBM system (\ref{eq:bbm}) in the form of an initial-boundary value problem
\begin{equation}\label{eq:bbm2a}
\begin{aligned}
& \eta_t+\Div((D+\eta)\bu) - \frac{1}{6} \Div (D^2 \nabla \eta_t)=0\ ,  \\
& \bu_t+g\nabla\eta+ \frac{1}{2}\nabla|\bu|^2-\frac{1}{6}\nabla(\Div(D^2\bu_t))=0\ ,
\end{aligned}
\end{equation}
where the initial state of the problem is specified by the initial conditions 
\begin{equation}\label{eq:ics}
\eta(\bx,0)=\eta_0(\bx), \quad \bu(\bx,0)=\bu_0(\bx), \quad \forall \bx\in \Omega, 
\end{equation}
and on the boundary $\partial \Omega$ we assume physically important slip-wall boundary conditions 
\begin{equation}\label{eq:bcs}
\nabla\eta\cdot \bn=0,  \quad \bu\cdot \bn=0, \quad \text{ on } \partial\Omega\ .
\end{equation}
Compatibility boundary conditions on the initial data such as $\nabla\eta_0\cdot\bn=0$ and $\bu_0\cdot\bn=0$ should also be considered.
Equations (\ref{eq:bbm2a}), (\ref{eq:ics}), (\ref{eq:bcs}) form an initial-boundary value problem. Note that the Neumann boundary condition for $\eta$ is satisfied by the solutions of the Euler equations \cite{Khakimzyanov2018a}. This means that the particular boundary condition is physical and is not restrictive. On the other hand, systems like the Peregrine system, has been proved to be well-posed with boundary conditions only for the velocity and only in one-dimension \cite{adamy2011,FP2005}. This result doesn't guarantee though that the solution satisfies the additional requirement for the free surface, while the problem in 2D is still open and only experimental evidence exist (see for example \cite{KMS2019}).
\begin{remark}
We know from calculus that
$\nabla (\Div \bw)= \Delta \bw  + \Curl(\Curl \bw)$.
In our case, where $\Curl \bu_t=0$, we have that
$ \nabla (\Div \bu_t)= \Delta \bu_t\ .$ This implies that whenever the bottom is flat, 
the regularization operator $I-\frac{1}{6}\nabla (\Div ~)$ coincides with the classical elliptic operator $I-\frac{1}{6}\Delta$  and thus the theory of \cite{DMS2009} applies here too. In addition, using the small bottom variations assumption we conclude that this is still valid in the case of a variable bottom. Since the regularization properties of the aforementioned BBM-BBM system are expected to be the same as the original system of \cite{BCL2015}, we focus our attention to the new one due to its favorable properties when it comes to the application of the slip-wall boundary conditions.
\end{remark}

\subsection{Conservation properties and regularity}
Contrary to the classical BBM-BBM (and also Peregrine) type systems in 2D, the aforementioned BBM-BBM system is Hamiltonian. Specifically, any solution $(\eta,\bu)$ of the initial-boundary value problem  (\ref{eq:bbm2a})--(\ref{eq:bcs}) conserves the energy functional
\begin{equation}\label{eq:hamiltonian} 
E(t)\doteq \frac{1}{2}\int_{\Omega} g\eta^2+(D+\eta)|\bu|^2~d\bx\ ,
\end{equation}
in the sense that
$E(t)=E(0)$ for all $t>0$. The energy functional (\ref{eq:hamiltonian}) 
in non-dimensional variables takes the form
\begin{equation}\label{eq:hamiltonian2}
E_\varepsilon(t)\doteq \frac{1}{2}\int_{\Omega} \eta^2+(1+\beta \tilde{D}_b+\varepsilon\eta)|\bu|^2~d\bx\ . 
\end{equation}
The conservation of energy gives an upper bound of the $L^2$-norm of the solution. 
To show the conservation of energy we write system (\ref{eq:bbm2a}) in the form
\begin{equation}\label{eq:conservform}
\begin{aligned}
\eta_t+\nabla \cdot P =0\ ,\\
\bu_t+\nabla Q = 0\ ,
\end{aligned}
\end{equation}
where $P=(D+\eta)\bu-\frac{1}{6} D^2\nabla\eta_t$ and $Q=g\eta+\frac{1}{2}|\bu|^2-\frac{1}{6}\Div (D^2\bu_t)$. 
Then, after integrating by parts and applying the slip-wall boundary conditions at $\partial\Omega$ we have
\begin{align*}
0 &= \int_\Omega \eta_tQ+\bu_t\cdot P+\Div P Q+P\cdot\nabla Q~d\bx\\
&= \int_\Omega g\eta_t\eta+\frac{1}{2}\eta_t|\bu|^2-\frac{1}{6}\eta_t \Div (D^2\bu_t) +D\bu_t\cdot \bu+\eta\bu_t\cdot\bu-\frac{1}{6}\bu_t D^2\nabla\eta_t ~d\bx\\
&=\frac{d}{dt} \frac{1}{2}\int_\Omega g\eta^2+D|\bu|^2 + \eta|\bu|^2~d\bx\\
&=\frac{d}{dt} E(t)\ .
\end{align*}
It is noted that the key point for the conservation of energy is the particular choice 
of the parameter $\theta$ which ensures that $\tilde{a}+\tilde{b}=-(\tilde{c}+\tilde{d})$.

From  (\ref{eq:conservform}) we observe that $\nabla\times \bu_t=0$ since $\nabla\times\nabla Q=0$ for any smooth enough function $Q$. We conclude that the vorticity of the horizontal velocity is conserved in the sense $\nabla\times\bu=\nabla\times\bu_0$. Therefore, if the flow, initially, is irrotational, then it remains irrotational with $\nabla\times \bu=0$ for all $t\geq0$.

\subsection{Variational derivation}
The variational derivation of model equations appears to be attractive not only because of its simplicity but also because of the physical verification of the model and the energy conservation properties that can be obtained in trivial way. Here we follow the methodology introduced in \cite{SW1968,CD2012}. We first consider the following  approximations of the kinetic and potential energies: The shallow-water approximation of the kinetic energy is
$$\mathcal{K}=\frac{\rho}{2}\int_{t_1}^{t_2}\int_\Omega (D+\eta)|\bu|^2\ d\bx\, dt\ ,$$
and the analogous approximation of the potential energy is
$$\mathcal{V}=\frac{\rho}{2}\int_{t_1}^{t_2}\int_\Omega g\eta^2\ d\bx\, dt\ ,$$
where $\rho$ denotes the density of the water. 
We also consider the non-hydrostatic approximation of the conservation of mass 
$$\eta_t+\Div[(D+\eta)\bu] - \frac{1}{6} \Div (D^2 \nabla \eta_t)=0\ ,$$
where $H=D+\eta$ denotes the total depth of the water. 
Then, we define the action integral
$$\mathcal{I}=\mathcal{K}-\mathcal{V}+\rho\int_{t_1}^{t_2}\int_{\Omega}[\eta_t+\Div[(D+\eta)\bu] - \frac{1}{6} \Div (D^2 \nabla \eta_t)]\phi~d\bx\, dt\ ,$$
where we impose the mass conservation by introducing the Lagrange multiplier $\phi(x,t)$, which as we shall see in the sequel coincides with a velocity potential of the horizontal velocity $\bu$.

The Euler-Lagrange equations for the action integral $\mathcal{I}$ are then the following
\begin{align}
\delta \phi~: & \quad \eta_t+\Div[(D+\eta)\bu] - \frac{1}{6} \Div (D^2 \nabla \eta_t)=0\ , \label{eq:var1}\\
\delta \bu~: & \quad \bu-\nabla\phi=0\ , \label{eq:var2}\\
\delta \eta~: & \quad \frac{1}{2}|\bu|^2-g\eta-\phi_t+\frac{1}{6}\nabla\cdot(D^2\nabla\phi_t)-\bu\cdot\nabla\phi=0\ . \label{eq:var3}
\end{align}
Taking the gradient of all terms in (\ref{eq:var3}) and eliminating $\nabla\phi$ using (\ref{eq:var2}) 
we obtain the approximate momentum conservation equation
\begin{equation}\label{eq:var4}
 \bu_t+g\nabla \eta+ \frac{1}{2}\nabla|\bu|^2-\frac{1}{6}\nabla(\Div(D^2\bu_t))=0\ .
\end{equation}
The new BBM-BBM system consists of the approximations of mass conservation (\ref{eq:var1}) and momentum conservation (\ref{eq:var4}), and its solutions preserve the approximation of the total energy $\mathcal{E}=\mathcal{K}+\mathcal{V}$. We will call the new system regularized shallow water equations so as to differentiate from the other BBM-BBM systems.

\section{Well-posedness}\label{sec:wellposed}
\subsection{The flat bottom case}

In this section we study the well-posedness of the initial-boundary value problem (\ref{eq:bbm2a})--(\ref{eq:bcs})).  
For simplicity we first consider flat bottom topography $D(\bx)=D_0$ and with the same initial and boundary conditions as before.
For theoretical purposes we consider the system in dimensionless and scaled variables with $\varepsilon=\sigma^2$, and also we assume that the domain $\Omega\subset\R^2$ is smooth 
(at least piecewise smooth with no reentrant corners). The equations are simplified by dropping the tilde from the notation, and the initial-boundary value problem \eqref{eq:bbm2a}, \eqref{eq:ics}, \eqref{eq:bcs} can be written as 
\begin{equation}\label{eq:sys1}
\begin{aligned}
&\eta_t+\Div \bu +\varepsilon\Div(\eta \bu) -\varepsilon\frac{1}{6} \Delta \eta_t = 0 \ ,  \\
& \bu_t+\nabla \eta+\varepsilon\frac{1}{2}\nabla |\bu|^2-\varepsilon\frac{1}{6} \nabla(\Div \bu_t)= 0\ , \\
&\eta(\bx,0)=\eta_0(\bx), \quad \bu(\bx,0)=\bu_0(\bx), \quad \text{ on } \Omega\ , \\
&\nabla\eta\cdot \bn=0,  \quad \bu\cdot \bn=0, \quad \text{ on } \partial\Omega\ .
\end{aligned}
\end{equation}

For the purposes of this paper we will use the usual Sobolev space $H^1=H^1(\Omega)$ consisting of weakly differentiable functions on $\Omega$, 
and the space 
$$\bH^1_0(\Omega)=\{\bu\in H^1\times H^1:  \bu\cdot \bn=0 \ \mathrm{ on } \ \partial \Omega \}\ .$$
We equip the space $H^1$ with the usual $H^1$-norm defined for all $w\in H^1$ 
to be $\|w\|_1=(\|w\|^2+\|\nabla w\|^2)^{1/2}$, and the space $\bH^1_0$ 
with the norm $\|\bw\|_1=(\|w_1\|_1^2+\|w_2\|_1^2)^{1/2}$ for all $\bw=(w_1,w_2)^T\in \bH^1_0$. 
We will also denote the usual inner product of $L^2$ by $(\cdot,\cdot)$, and we will use the spaces $L^p$ and $\bL^p=L^p\times L^p$ for any $p>0$. Note that because we will always consider functions defined in a bounded domain $\Omega$ we will refrain from mentioning the domain in the notation of the functions spaces.

We will find also useful the Sobolev space $W_k^\infty(\Omega)=\{u\in L^1_{\text{loc}}(\Omega)\, :\, \|u\|_{k,\infty}<\infty\}$ where $\|u\|_{k,\infty}=\max_{|\alpha|\leq k} \|\partial^\alpha u\|_{L^\infty}$.

\begin{remark}\label{rem:remarkcurl}
Denoting $\|\bu\|_\div=(\|\bu\|^2+\|\Div \bu\|^2)^{1/2}$, we define the spaces 
$$H^{\div}(\Omega)=\{ \bu\in \bL^2(\Omega),\Div \bu\in L^2(\Omega) \}, 
\qquad H_0^{\div}(\Omega)=\{ \bu\in H^{\div}(\Omega): \bu\cdot \bn =0  \ \mathrm{ on } \ \partial \Omega \}\ ,$$ 
and 
$$H^{\curl}(\Omega)=\{ \bu\in \bL^2(\Omega), \Curl \bu\in L^2(\Omega) \}, 
\qquad H_0^{\curl}(\Omega)=\{ \bu\in H^{\curl}(\Omega): \bu\times \bn =0  \ \mathrm{ on } \ \partial \Omega \}.$$ 
It is known that for a domain $\Omega$ with appropriately smooth boundary, we have 
$$\|\bu\|_1\lesssim (\|\bu\|_{\div}^2+\|\Curl \bu\|^2)^{1/2},
\quad \mbox{ for } \bu\in H^{\div}_0(\Omega)\cap H^{\curl}(\Omega)\ . $$
For details on the properties of these particular spaces we refer to \cite{GR1986}.
\end{remark}

\begin{remark}
We will also consider the spaces
$$H^{\div}_s(\Omega)=\{\bu\in H^{\div}(\Omega), \Div \bu\in H^s(\Omega)\}, 
\quad H_{s,0}^\div(\Omega)=\{\bu \in H^\div_s(\Omega)\cap H^{\div}_0(\Omega)\}\ ,$$
equipped with the norm
\begin{equation}
\|\bu\|_{s,\div}=\left(\|\bu\|^2+\|\Div\bu\|_s^2\right)^{1/2}, \quad \mbox{ for }~ \bu\in H^\div_s\ .
\end{equation} 
These spaces are practically the departure spaces of the operator $I-\nabla( \nabla \cdot )$. 
We reserve the notation $H^2$ to denote the classical Sobolev space $W^{2,2}$.
Furthermore, we define the negative norms
$$\|\bu\|_{-s,\div}=\sup_{\bz \in H_{s,0}^{\div}, \bz\ne 0}\frac{(\bu,\bz)}{{\|\bz\|_{s,\div}}}\ ,
$$
while $\|u\|_{-s}$ denotes the standard dual norm in the Sobolev space $H^{s}$.
\end{remark}

We define the bilinear forms $a:H^1\times H^1\rightarrow \mathbb{R}$ 
and $b:H^\div_0\times H^\div_0\rightarrow \mathbb{R}$ as
\begin{align}
&a(u,v)=(u,v)+\varepsilon\frac{1}{6}(\nabla u,\nabla v), \quad \mbox{ for all } u, v\in H^1\ , \label{eq:wp1} \\
&b(\bu,\bv)=(\bu,\bv)+\varepsilon\frac{1}{6}(\Div \bu, \nabla \cdot \bv), \quad \mbox{ for all } \bu, \bv\in H^\div_0\ . \label{eq:wp2}
\end{align}

Then the weak formulation of the problem (\ref{eq:bbm2a}) is defined as follows: 
Seek $(\eta,\bu)\in H^1\times \bH^1_0$ such that
\begin{equation}\label{eq:varpsys}
\begin{aligned}
& a(\eta_t,\chi) + (\Div \bu,\chi)+\varepsilon(\Div (\eta\bu),\chi)=0, \quad \mbox{ for all } \chi\in H^1\ ,\\
& b(\bu_t,\bchi) + (\nabla \eta,\bchi) + \varepsilon\frac{1}{2}(\nabla |\bu|^2,\bchi)=0, \quad \mbox{ for all } \bchi\in \bH^1_0\ .
\end{aligned}
\end{equation}

A solution of (\ref{eq:varpsys}) is called a weak solution. Using the divergence 
theorem, it can be seen that any classical solution of the system (\ref{eq:sys1}) 
satisfies the weak formulation (\ref{eq:varpsys}), and thus classical solutions are 
also weak solutions. 

Before stating the main result of this paragraph, we define the mappings  $f:\bL^2 \rightarrow H^1$ and $g:L^2\rightarrow H^\div_0$ as follows
\begin{equation}\label{eq:definf}
a(f(\bw),\chi)=(\bw,\nabla \chi), \mbox{ for all } \bw\in \bL^2 \mbox{ and } \chi \in H^1\ ,
\end{equation}
and
\begin{equation}\label{eq:defing}
b(g(w),\bchi)= (w, \Div \bchi), \mbox{ for all } w\in L^2 \mbox{ and } \bchi \in H^\div_0\ .
\end{equation}

The mappings $f$ and $g$ are well defined. Indeed, it is not hard to see that they are continuous 
in $\bL^2$ and $L^2$, respectively, in the sense that $\|f(\bw)\|\lesssim \|\bw\|$ and $\|g(w)\|\lesssim \|w\|$, 
where $\lesssim$ denotes the inequality $\| \cdot\| \le C \| \cdot\|$ for an unspecified
positive constant $C$, independent of $\varepsilon$.
Specifically, we have the following lemma:
\begin{lemma}\label{lem:maps}
The operators $f$ and $g$  in \eqref{eq:definf} and \eqref{eq:defing} respectively, are well defined. 
Moreover, the following inequalities hold:
\begin{equation}\label{eq:ineq1a}
\|f(\bw)\|_1\lesssim \|\bw\|,  \quad \mbox{ for all } \bw\in \bL^2\ ,
\end{equation}
and
\begin{equation}\label{eq:ineq1b}
\|g(w)\|_{\rm div}\lesssim \|w\|, \quad \mbox{ for all } w\in L^2\ .
\end{equation}
Furthermore, $g(w)\in \bH^1_0$ and $\|g(w)\|_1\lesssim \|w\|$ for all $w\in L^2$.
\end{lemma}
\begin{proof}
The existence of the $f$ and $g$ (and also the inequalities (\ref{eq:ineq1a}) and (\ref{eq:ineq1b})) is a direct consequence of Riesz representation theorem. Moreover, the continuity of $f$  can be proven easily using the Cauchy-Schwarz inequality
$$
\|f(\bw)\|_1^2 \lesssim a(f(\bw),f(\bw)) = (\bw, \nabla f(\bw))\leq \|\bw\|\|f(\bw)\|_1\ ,
$$
and thus $\|f(\bw)\|_1\leq \|\bw\|$. Similarly, one can prove the inequality $\|g(w)\|_\div\lesssim\|w\|$ as well.
In addition, since (\ref{eq:defing}) holds for all $\bchi\in H^\div_0$, by choosing $\bchi\in \mathcal{D}(\bar{\Omega})^2$, (where $\mathcal{D}(\bar{\Omega})$ is the space of infinitely differentiable functions with compact support on $\Omega$), yields that $\Div g(w)-w\in H^1$ and
$$g(w)=\nabla(\Div g(w)-w)~ \mbox{ in } \bL^2\ ,$$ hence $\Curl g(w)=0$ in $\Omega$, (see also \cite{GR1986}, Thm. 2.9). Therefore, $g(w)\in \bH^1_0$, and due to Remark \ref{rem:remarkcurl} we conclude $\|g(w)\|_1\lesssim \|w\|$. 
\end{proof}

\begin{remark}\label{rem:maps}
Alternatively, we can reach to the same conclusion by observing that $g(w)$ 
is the solution $g(w)=(I-\varepsilon\frac{1}{6}\nabla \nabla \cdot ~)^{-1}\nabla w$ so that we have $\Curl g(w) =0$. 
\end{remark}

\begin{remark}\label{rem:maps2}
By the standard theory of elliptic equations \cite{triebel}, if $\Div \bw\in \bH^{s-1}$ for $s>\frac{3}{2}$ and $\bw\cdot\bn=0$ on $\partial\Omega$, then $f(\bw)\in H^{s+1}$ is the weak solution of the Neumann problem of the equation $(I-\varepsilon\frac{1}{6}\Delta)f(\bw)=-\Div\bw$ in $L^2$ with $\nabla f(\bw)\cdot\bn =0$ in $L^2(\partial\Omega)$. Thus, $f(\bw)=-(I-\varepsilon\frac{1}{6}\Delta \cdot ~)^{-1}\Div \bw$, where the operator $I-\varepsilon\frac{1}{6}\Delta$ has domain the space $X=\{v\in H^2: \nabla v\cdot \bn=0~\text{ on $\partial\Omega$}\}$. 
\end{remark}

\noindent
Now we are ready to prove the main result of this section.

\begin{theorem}\label{thrm:main}
For any initial conditions $(\eta_0,\bu_0)\in H^1\times \bH^1_0$, there exists a maximal time $T>0$, independent of $\varepsilon$, and a unique weak solution
$(\eta,\bu)\in C^1([0,T]; H^1)\times C^1([0,T]; \bH^1_0)$ of the initial-boundary value problem (\ref{eq:sys1}).
\end{theorem}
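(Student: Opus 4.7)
The plan is to recast the weak formulation~(\ref{eq:varpsys}) as an abstract first-order ODE on the Banach space $X = H^1 \times \bH^1_0$ and then invoke the Banach-space version of the Picard--Lindel\"of theorem. First I would integrate by parts in~(\ref{eq:varpsys}) exploiting the boundary conditions: for any $\chi \in H^1$, the condition $\bu\cdot\bn = 0$ on $\partial\Omega$ yields $(\Div\bu,\chi)=-(\bu,\nabla\chi)$ and $(\Div(\eta\bu),\chi)=-(\eta\bu,\nabla\chi)$, while for $\bchi \in \bH^1_0$ one has $(\nabla\eta,\bchi)=-(\eta,\Div\bchi)$ and $(\nabla|\bu|^2,\bchi)=-(|\bu|^2,\Div\bchi)$. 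Combined with the definitions~(\ref{eq:definf}) and~(\ref{eq:defing}), the weak system collapses to the autonomous ODE $\eta_t = f(\bu + \varepsilon\eta\bu)$, $\bu_t = g(\eta + \tfrac{\varepsilon}{2}|\bu|^2)$, and by Lemma~\ref{lem:maps} the right-hand side $F(\eta,\bu) := (f(\bu+\varepsilon\eta\bu),\, g(\eta+\tfrac{\varepsilon}{2}|\bu|^2))$ maps $X$ into itself.

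Next I would verify that $F$ is locally Lipschitz on $X$. The continuity estimates $\|f(\bw)\|_1 \lesssim \|\bw\|$ and $\|g(w)\|_1 \lesssim \|w\|$ from Lemma~\ref{lem:maps} reduce the task to controlling the nonlinearities $\eta\bu$ in $\bL^2$ and $|\bu|^2$ in $L^2$. The two-dimensional Sobolev embedding $H^1(\Omega) \hookrightarrow L^4(\Omega)$, combined with the telescoping identities $\eta_1\bu_1 - \eta_2\bu_2 = (\eta_1-\eta_2)\bu_1 + \eta_2(\bu_1-\bu_2)$ and $|\bu_1|^2 - |\bu_2|^2 = (\bu_1-\bu_2)\cdot(\bu_1+\bu_2)$ together with H\"older's inequality, produces Lipschitz bounds of the form $\|F(\eta_1,\bu_1)-F(\eta_2,\bu_2)\|_X \leq C(R)\|(\eta_1-\eta_2,\, \bu_1-\bu_2)\|_X$ on any ball of radius $R$ in $X$. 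Because the constants in Lemma~\ref{lem:maps} are $\varepsilon$-independent and $\varepsilon\le 1$, so is $C(R)$.

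With $F$ locally Lipschitz on $X$, the Picard--Lindel\"of theorem for ODEs in Banach spaces produces a maximal time $T = T(\|(\eta_0,\bu_0)\|_X) > 0$ and a unique solution $(\eta,\bu) \in C([0,T]; X)$. Since $F$ is in fact $C^1$ on $X$, substituting back into $(\eta,\bu)_t = F(\eta,\bu)$ upgrades the solution to $C^1([0,T]; X) = C^1([0,T]; H^1) \times C^1([0,T]; \bH^1_0)$, which is the claimed regularity. The uniformity of $T$ in $\varepsilon$ follows from that of $C(R)$.

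The most delicate ingredient, which I expect to be the main obstacle, is already concealed in Lemma~\ref{lem:maps}: namely, that $g(w)$ lies in the full $\bH^1_0$ rather than merely in $H^\div_0$. This rests on the irrotationality identity $\Curl g(w) = 0$ together with the $H^\div\cap H^\curl$ regularity estimate recorded in Remark~\ref{rem:remarkcurl}, and so essentially requires the smoothness hypothesis on $\partial\Omega$. Without this upgrade one cannot conclude that $\bu_t \in \bH^1_0$, the slip-wall condition $\bu_t \cdot \bn = 0$ would not be preserved in time in the natural function space, and the closed loop for the ODE on $X$ would break down.
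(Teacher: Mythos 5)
Your local existence argument coincides with the paper's: both recast the weak formulation as an autonomous ODE $U_t=F(U)$ on $X=H^1\times\bH^1_0$ via the solution operators $f$ and $g$, verify that $F$ is smooth (locally Lipschitz, respectively $C^1$) using Lemma~\ref{lem:maps} together with the two-dimensional embedding $H^1\hookrightarrow L^4$, and invoke the Cauchy--Lipschitz theorem in Banach spaces. Your closing observation that everything hinges on $g(w)$ landing in $\bH^1_0$ rather than merely in $H^{\div}_0$ --- via $\Curl g(w)=0$ and the div--curl estimate of Remark~\ref{rem:remarkcurl} --- is precisely the point made in Lemma~\ref{lem:maps} and Remark~\ref{rem:maps}, so that part is sound.

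The genuine gap is the $\varepsilon$-independence of $T$, which you dispatch in one sentence by asserting that the local Lipschitz constant $C(R)$ is uniform in $\varepsilon$. This is not justified: the bilinear forms \eqref{eq:wp1}--\eqref{eq:wp2} control $\|\nabla\cdot\|^2$ and $\|\Div\cdot\|^2$ only with the weight $\varepsilon/6$, so their coercivity in the $H^1$ and $H^{\div}$ norms degenerates as $\varepsilon\to0$; tracing the proof of Lemma~\ref{lem:maps} gives only $\|\nabla f(\bw)\|\lesssim\varepsilon^{-1}\|\bw\|$ (the $L^2$ part of the bound is uniform, the gradient part is not), and hence a Lipschitz constant for $F$ on $X$ that blows up as $\varepsilon\to0$. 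Correspondingly, the paper does \emph{not} obtain the uniform time from the fixed-point argument at all: it first gets $T=T(\varepsilon)$ and then proves uniformity by a separate mechanism that your proposal omits entirely, namely the energy identity \eqref{eq:energy} for the $\varepsilon$-weighted functional $I_\varepsilon(t)$ (in which the gradient and divergence terms carry exactly the weight $\varepsilon/6$ that the forms $a,b$ actually provide), followed by H\"older and Gagliardo--Nirenberg estimates on the cubic terms yielding the Riccati-type inequality $\tfrac{d}{dt}I_\varepsilon\lesssim I_\varepsilon+I_\varepsilon^2$ and the a priori bound \eqref{eq:difineq}; since $I_0(0)\le I_\varepsilon(0)\le I_1(0)$, this produces a lower bound on the existence time depending only on the data. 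You need to supply this a priori estimate (or an equivalent $\varepsilon$-weighted continuation argument) to close the proof.
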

\begin{proof}
With the help of the mappings $f$ and $g$ we write (\ref{eq:wp1}) and (\ref{eq:wp2}) 
as a system of ordinary differential equations in the distributional sense
\begin{align}
& \eta_t= f(\bu)+\varepsilon f(\eta\bu)\ , \label{eq:massw}\\
& \bu_t=g(\eta)+\varepsilon\frac{1}{2}g(|\bu|^2)\ , \label{eq:momw}
\end{align}
or in the more compact form
\begin{equation}\label{eq:odesystem}
U_t=F(U)\ ,
\end{equation}
where $U=(\eta,\bu)^T$ and 
\begin{equation}
F(U)=\left(f(\bu)+\varepsilon f(\eta\bu),  g(\eta)+\varepsilon\frac{1}{2}g(|\bu|^2)\right)^T\ .
\end{equation}
If $\eta\in H^1$ and $\bu\in \bH^1_0$ then $\eta\bu\in \bL^2$ and $|\bu|^2\in L^2$ due to Grisvard's lemma \cite{Grisvard} (see alternatively \cite{BH2015}) and thus the function $F$ is well-defined. Moreover, 
since $f$ maps its argument into $H^1$ and $g$ into $\bH^1_0$ we deduce that $F$ is $C^1$ on $H^1\times \bH^1_0$, with derivative $F'(\eta^{\ast},\bu^{\ast})$ given by
\begin{equation}
F'(\eta^{\ast},\bu^{\ast})(\eta,\bu) = \begin{pmatrix}
f(\bu)+\varepsilon f(\eta\bu^\ast)+\varepsilon f(\eta^{\ast}\bu)\\
g(\eta)+\varepsilon g(\bu^\ast\cdot \bu)
\end{pmatrix}\ .
\end{equation}
The continuity of $F'$ follows from the continuity of $f$ and $g$: Let $U=(\eta,\bu)^T\in H^1\times \bH^1_0$, then using Lemma \ref{lem:maps} we have,
\begin{align*}
\|F'(\eta^\ast,\bu^\ast)U\|_1 &=\sqrt{\|f(\bu)+\varepsilon f(\eta\bu^\ast)+\varepsilon f(\eta^{\ast}\bu)\|_1^2+\|g(\eta)+\varepsilon g(\bu^\ast\cdot \bu))\|_1^2}\\
&\leq  \sqrt{\|\bu\|^2+\|\eta\bu^\ast\|^2+\|\eta^\ast\bu\|^2+\|\eta\|^2+\|\bu^\ast\cdot\bu\|^2}\\
&\leq  \sqrt{\|\bu\|^2+\|\eta\|^2+\|\eta\|_{L^4}^2\|\bu^\ast\|_{\bL^4}^2+\|\eta^\ast\|_{L^4}^2\|\bu\|_{\bL^4}^2+\|\bu^\ast\|_{\bL^4}^2\|\bu\|_{\bL^4}^2}\\
&\lesssim \|U\|_1 \ ,
\end{align*}
where we have used the following Gagliardo-Nirenberg inequality \cite{brezis2010},
$$\|w\|_{L^4}\lesssim \|w\|^{1/2}\|w\|_1^{1/2}\lesssim \|w\|_1, \quad w\in H^1\ .$$
Taking $(\eta^\ast,\bu^\ast)\in H^1\times \bH_0^1$ we deduce that $F'(\eta^\ast,\bu^\ast)$ is continuous.
Thus, from the theory of ordinary differential equations in Banach spaces (cf. e.g. \cite{Berger77,brezis2010}), we have that for any initial conditions $(\eta_0,\bu_0)\in H^1\times \bH^1_0$, there exists a maximal time $T=T(\varepsilon)>0$ and a unique solution
$(\eta,\bu)\in C^1([0,T]; H^1)\times C^1([0,T]; \bH^1_0)$ of the initial-boundary value problem (\ref{eq:sys1}).

To prove that the maximal time $T$ is independent of $\varepsilon$, first we observe that the solution $(\eta,\bu)$ of the initial-boundary value problem (\ref{eq:bbm2a})--(\ref{eq:bcs}) satisfies the following energy conservation:
\begin{equation}\label{eq:energy}
\frac{1}{2}\frac{d}{dt}\int_\Omega \left[\eta^2+|\bu|^2+\frac{\varepsilon}{6} \left(|\nabla\eta|^2+[\Div \bu]^2\right)\right] =\varepsilon \int_\Omega \eta\bu\cdot \nabla\eta+\frac{\varepsilon}{2}\int_\Omega |\bu|^2\Div \bu\ .
\end{equation}
Defining
$$I_\varepsilon(t)=(1-\frac{\varepsilon}{6})(\|\eta\|^2+\|\bu\|^2)+\frac{\varepsilon}{6}(\|\eta\|_1^2+\|\bu\|_\div^2)\ ,$$
we rewrite (\ref{eq:energy}) in the form
$$\frac{1}{2}\frac{d}{dt}I_\varepsilon=\varepsilon \int_\Omega \eta\bu\cdot \nabla\eta+\frac{\varepsilon}{2}\int_\Omega |\bu|^2\Div \bu\ .$$
Using H\"{older}'s inequality we have
\begin{equation}\label{eq:en1}
\left| \varepsilon \int_\Omega \eta\bu\cdot \nabla\eta+\frac{\varepsilon}{2}\int_\Omega |\bu|^2\Div \bu \right| \lesssim \varepsilon \|\nabla\eta\|\|\bu\|_{\bL^4}\|\eta\|_{L^4}+\varepsilon\|\bu\|_{\bL^4}^2\|\Div\bu\|\ .
\end{equation}
From (\ref{eq:en1}) and using the Gagliardo-Nirenberg, it follows
$$
\left|\frac{d}{dt}I_\varepsilon \right| \lesssim \varepsilon\|\eta\|_1^{3/4}\|\eta\|_1^{3/2}\varepsilon^{1/4}\|\eta\|^{1/4}\|\bu\|^{1/2}\|\bu\|_1^{1/2}+\varepsilon\|\bu\|\|\bu\|^2_1\ .
$$
Using Young's inequality we obtain
\begin{align*}
\left|\frac{d}{dt}I_\varepsilon \right|
&\lesssim \varepsilon^{3/2} \|\eta\|_1^3+\varepsilon^{1/2}\|\eta\|\|\bu\|\|\bu\|_1+\|\bu\|^3+\varepsilon^{3/2}\|\bu\|_1^3\\
&\lesssim \varepsilon^{3/2} \|\eta\|_1^3+\|\eta\|^{3/2}\|\bu\|^{3/2}+\varepsilon^{3/2}\|\bu\|_1+\|\bu\|^3+\varepsilon^{3/2}\|\bu\|_1^3\\
&\lesssim \|\eta\|^3+\|\bu\|^3+ \varepsilon^{3/2}(\|\eta\|_1^3+\|\bu\|_1^3)\\
(\text{since $\Curl \bu=\text{const}$}) &\lesssim \|\eta\|^3+\|\bu\|^3+ \varepsilon^{3/2}(\|\eta\|_1^3+\|\bu\|_\div^3) \ ,
\end{align*}
which implies
$$\frac{d}{dt}I_\varepsilon(t) \lesssim I_\varepsilon^{3/2}(t)\ .$$
The last inequality gives the {\em a priori} bound
\begin{equation}\label{eq:difineq}I_\varepsilon(t)\leq \frac{I_\varepsilon(0)}{\left(1-Ct \sqrt{I_\varepsilon(0)}\right)^2}\ .
\end{equation}
Since 
$$I_\varepsilon(0)=\int_\Omega \left[|\bu_0|^2+\eta_0^2+\frac{\varepsilon}{6} \left(|\nabla\eta_0|^2+[\Div \bu_0]^2\right)\right]\ ,$$
we have that $I_0(0)\leq I_\varepsilon(0)\leq I_1(0)$ for $0\leq \varepsilon \leq 1$ and thus
$$I_\varepsilon(t)\leq \frac{I_1(0)}{\left(1-Ct \sqrt{I_1(0)}\right)^2}\ .$$
on a time interval $[0,\tilde{T})$ where $\tilde{T}=O\left(1/\sqrt{I_1(0)}\right)$ independent of $\varepsilon$. Therefore, the maximal time of existence of the solution $(\eta,\bu)$ can be extended up to $\tilde{T}$.
Hence, we conclude that for $0<\varepsilon\ll1$, the maximal time $T$ is independent of $\varepsilon$.
\end{proof}

\begin{remark}
Note, that although the slip-wall boundary condition is satisfied by $\bu$, since $\bu\in \bH^1_0$, this is not obvious for the Neumann boundary condition of $\eta$. Since $\Div(\eta\bu)\in \bH^{s-1}$ for $s<1$ (see Grisvard's lemma) we have that $f(\bu+\varepsilon\eta\bu)$ is in $H^{s+1}$ for all $s<1$. Using (\ref{eq:massw}) we see that the trace of the normal derivative $\partial/\partial \bn$ on $\eta_t$ makes sense in $L^2(\partial\Omega)$, \cite{triebel}. By the Remark \ref{rem:maps2} we have that $\nabla f(\bu+\varepsilon\eta\bu)\cdot \bn=0$, and thus the solution $\eta$ satisfies the weak Neumann boundary condition $\nabla \eta_t\cdot \bn=0$ on $\partial \Omega$. Therefore, solutions of (\ref{eq:varpsys}) with the requisite  regularity (for example $C^2$), and with appropriate compatibility conditions satisfied by the initial conditions, automatically satisfy the boundary condition $\nabla\eta\cdot\bn=0$ on $\partial\Omega$ in a strong sense.
\end{remark}

\begin{remark}
Due to the regularity properties of the operator $I-\nabla(\nabla \cdot)$, 
we conclude that if the initial conditions are $(\eta_0,\bu_0)\in H^2\times (H^\div_{1,0}\cap \bH^1)$, then there exists a maximal time $T$ and a unique solution $(\eta,\bu)\in H^2\times (H^\div_{1,0}\cap \bH^1)$ of the initial-boundary value problem  (\ref{eq:sys1}) for $t\leq T$. Moreover, after multiplying the mass equations with $-\Delta\eta$ and the momentum equation with $\bu$, and using again the irrotationality of $\bu$ and the divergence theorem, we obtain the Bernoulli-type inequality
$$\frac{1}{2}\frac{d}{dt} Y_\varepsilon(t)\lesssim Y_\varepsilon(t)+ Y_\varepsilon^2(t)\ ,$$
for
$$Y_\varepsilon(t)=\|\bu\|^2+\frac{\varepsilon}{6}\|\Div \bu\|^2+\|\nabla\eta\|^2+\frac{\varepsilon}{6}\|\Delta\eta\|^2\ .$$ Solving this inequality we obtain an upper bound of the solution in $H^2\times \bH^1$. Similarly, we can obtain bounds of the solution in $H^2\times (H^\div_{1,0}\cap \bH^1)$. Note also that given sufficient smoothness, if the initial condition satisfies the compatibility condition $\nabla \eta_0\cdot\bn=0$ on $\partial\Omega$ then following Theorem \ref{thrm:main}, the solution will satisfy the Neumann condition $\nabla\eta\cdot\bn=0$.
\end{remark}

\begin{remark}
Local in time well-posedness of the Cauchy problem of similar Boussinesq systems to the one we studied here 
has been established in \cite{DMS2007} and in bounded domains with $\bu=0$ on $\partial\Omega$ 
in \cite{DMS2009,DMS2010}. In these cases one can show that the maximal time can be extended 
up to times of order $1/\sqrt{\varepsilon}$. In \cite{saut2012cauchy}, 
it was shown that the solution can be extended to times of $O(1/\varepsilon)$ 
if the domain is $\mathbb{R}^n$ and the initial conditions are of small amplitude. 
While these results also hold for the Cauchy problem associated to the system (\ref{eq:bbm2a}),
it is not obvious whether they can be extended to the case of bounded domains,
and we leave this question for future work.
\end{remark}

\subsection{The variable bottom case}

The previous analysis carries over to the case of general bottom topography under the assumption of mild bottom variations $D(\bx)=1+\beta D_b(\bx)\in W_1^\infty(\Omega)$ with $\beta\ll 1$.  As we shall see soon, the choice of the parameter $\beta$ can be very important.
 Consider the initial-boundary value problem \eqref{eq:bbm2a}, \eqref{eq:ics}, \eqref{eq:bcs} in nondimensional and scaled variables written as
\begin{equation}\label{eq:varbottopeps}
\begin{aligned}
& \begin{aligned} & \eta_t+\Div((D+\varepsilon\eta)\bu) - \frac{\varepsilon}{6} \Div (D^2 \nabla \eta_t)=0,  \\
& \bu_t+\nabla\eta+ \frac{\varepsilon}{2}\nabla|\bu|^2-\frac{\varepsilon}{6}\nabla(\Div(D^2\bu_t))=0,\end{aligned} \quad \text{ on }\Omega\ , \\
&\eta(\bx,0)=\eta_0(\bx), \quad \bu(\bx,0)=\bu_0(\bx), \quad \text{ on } \Omega\ , \\
&\nabla\eta\cdot \bn=0,  \quad \bu\cdot \bn=0, \quad \text{ on } \partial\Omega\ .
\end{aligned}
\end{equation}
In this case, we multiply the momentum equation with $D^2$. 
The weak formulation of the initial-boundary value problem 
(\ref{eq:bbm2a})--(\ref{eq:bcs}) then becomes: \\
Seek  $(\eta,\bu)\in (H^1,\bH^1_0)$ such that
\begin{equation}\label{eq:gencase}
\begin{aligned}
& a_D(\eta_t,\chi)+(\Div ((D+\varepsilon\eta)\bu,\chi)=0, \quad \forall \chi\in H^1\ ,\\
& b_D(\bu_t,\bchi)+(D^2\nabla \eta,\bchi)+\frac{\varepsilon}{2}(D^2\nabla |\bu|^2,\bchi)=0\quad \forall \bchi\in\bH^1_0\ ,
\end{aligned}
\end{equation}
where
\begin{equation}
\begin{aligned}
& a_D(u,v)=(u,v)+\frac{\varepsilon}{6}(D\nabla u,D\nabla v),\quad \forall u,v\in H^1\ , \\
& b_D(\bu,\bv)=(D\bu,D\bv)+\frac{\varepsilon}{6}(\Div (D^2\bu),\Div (D^2 \bv)),\quad \forall \bu,\bv\in \bH^1_0\ .
\end{aligned}
\end{equation}
We show bellow that the standard ``non-cavitation assumption'' is enough to guarantee well-posedness.
\begin{lemma}\label{lem:newbilinears}
Let $\varepsilon,\beta>0$ positive and small. If the bottom topography $D=1+\beta D_b\in W_1^\infty$ and also we assume for simplicity that
$$0<D_m\leq D(\bx)\leq D_M ,\quad\text{ and }\quad 0<\beta D'_m \leq |\nabla D(\bx)|\leq \beta D_M'\ , $$ 
then the bilinear forms $a_D$ and $b_D$ are continuous and coercive.
\end{lemma}
\begin{proof}
The continuity of $a_D$ and $b_D$ is straightforward under the assumption of bounded bottom topography $\|D\|_{1,\infty}< \infty$. 
then $a_D$ and $b_D$ are coercive as well. The coerciveness of $a_D$ is trivial while the coerciveness of $b_D$ can be shown as follows. For $\bu\in \bH^1_0$ we have that 
$$b_D(\bu,\bu)=\|D\bu\|^2+\frac{\varepsilon}{6}\|\Div (D^2\bu)\|^2\ .$$
Then, we have
$$
\begin{aligned}
\|\bu\|^2+\frac{\varepsilon}{6}\|\Div\bu\|^2 &= \|D^{-1}D\bu\|^2+\frac{\varepsilon}{6}\|\nabla\cdot(D^{-2}D^2\bu)\|^2\\
&\leq D_m^{-2}\|D\bu\|^2+\frac{\varepsilon}{6}\|\nabla(D^{-2})\cdot D^2\bu+D^{-2}\Div (D^2\bu)\|^2 \\
&\leq D_m^{-2}\|D\bu\|^2+\frac{\varepsilon}{3}\left(\|(D^{-1}\nabla D)^2\|_\infty\|D\bu\|^2+D_m^{-4}\|\Div (D^2\bu)\|^2 \|\right)\\
&\leq C\left(D_m^{-1},\varepsilon(\beta D_M')^2\right) b_D(\bu,\bu)\ .
\end{aligned}
$$
This implies that 
$$b_D(\bu,\bu)\geq C\|\bu\|_\div^2\ ,$$
where 
$C>0$ depends on $\varepsilon$ and $\beta$, and also on the bounds of the depth function and its gradient.
\end{proof}

Similarly to the flat bottom case, we generalize the mappings  $f:\bL^2 \rightarrow H^1$ and $g:L^2\rightarrow H^\div_0$ to include the general bottom topography 
\begin{equation}\label{eq:definfd}
a_D(f(\bw),\chi)=(\bw,\nabla \chi), \mbox{ for all } \bw\in \bL^2 \mbox{ and } \chi \in H^1\ ,
\end{equation}
and
\begin{equation}\label{eq:defingd}
b_D(g(w),\bchi)= (w, \Div (D^2\bchi)), \mbox{ for all } w\in L^2 \mbox{ and } \bchi \in H^\div_0\ .
\end{equation}

The mappings $f$ and $g$ satisfy Lemma \ref{lem:maps} with an additional hypothesis on the bottom topography $D$. Specifically, $f$ and $g$ satisfy the following lemma:
\begin{lemma}\label{lem:maps2}
If $D\in W_1^\infty$ then the operators $f$ and $g$  in \eqref{eq:definfd} and \eqref{eq:defingd} respectively, are well defined. 
Moreover, the following inequalities hold:
\begin{equation}\label{eq:ineq1ad}
\|f(\bw)\|_1\lesssim \|\bw\|,  \quad \mbox{ for all } \bw\in \bL^2\ ,
\end{equation}
and
\begin{equation}\label{eq:ineq1bd}
\|g(w)\|_{\rm div}\lesssim \|w\|, \quad \mbox{ for all } w\in L^2\ .
\end{equation}
Furthermore, $g(w)\in \bH^1_0$ and $\|g(w)\|_1\lesssim \|w\|$ for all $w\in L^2$.
\end{lemma}
\begin{proof} 
The proof follows immediately from the properties of $a_D$, $b_D$ and the definitions (\ref{eq:definfd}), (\ref{eq:defingd}).
\end{proof}

Theorem \ref{thrm:main} can be also extended in the general case of variable bottom topography with mild variations to the following theorem:
\begin{theorem}\label{thrm:main2}
If the bottom topography $D$ is as in Lemma \ref{lem:newbilinears}, then for any initial conditions $(\eta_0,\bu_0)\in H^1\times \bH^1_0$ that satisfy $\nabla\eta_0\cdot\bn=0$, there exists a maximal time $T>0$, independent of $\varepsilon$, and a unique weak solution
$(\eta,\bu)\in C^1([0,T]; H^1)\times C^1([0,T]; \bH^1_0)$ of the initial-boundary value problem, \eqref{eq:varbottopeps}.
\end{theorem}
\begin{proof}
First observe that system (\ref{eq:gencase}) can be written as
\begin{align}
& \eta_t= f(D\bu)+\varepsilon f(\eta\bu)\ ,\\
& \bu_t=g(\eta)+\varepsilon\frac{1}{2}g(|\bu|^2)\ ,
\end{align}
which has the same exact form as system (\ref{eq:odesystem}) in Theorem \ref{thrm:main}. The rest of the proof is very similar to the proof of Theorem \ref{thrm:main} and is omitted. 
\end{proof}

In the next section we explore the properties of the system (\ref{eq:bbm2a})--(\ref{eq:bcs}) 
using the standard Galerkin finite element method.

\section{Finite element discretization and error estimates}\label{sec:numsols}

Let $\Omega$ be a convex polygonal domain. We consider a regular triangulation $\mathcal{T}_h=\{\tau_j\}_{j=1}^{N_h}$ of $\Omega$ comprised of $N_h$ triangles $\tau$ such that $\Omega=\cup_{j=1}^{N_h}\tau_j$. We denote the maximum side length of the triangulation by $h, \ 0<  h <1$. 
We consider the standard finite-dimensional space $S_h^r$, for any integer $r\geq 2$, with $S_h^r \subset C(\bar{\Omega})\cap H^1$,
having the following approximation property: For any $w\in H^s$ the identity 
\begin{equation}\label{eq:femident}
\inf_{\chi\in S_h^r} \left\{\|w-\chi\|+h\|w-\chi\|_1 \right\}\lesssim h^s\|w\|_s,\qquad 1\leq s\leq r\ ,
\end{equation}
holds for small enough $h$. We consider the space $\bS_h^p=S_h^p\times S_h^p$, and we define the semi-discretization of system (\ref{eq:bbm2a})--(\ref{eq:bcs}) as the problem of finding $(\eta^h,\bu^h)\in S_h^r\times \bS_h^p$ that satisfy for all $h>0$
\begin{equation}\label{eq:semidisc}
\begin{aligned}
&\mathcal{A}(\eta^h_t,\chi)-((D+\eta^h)\bu^h,\nabla\chi)=0,\quad \mbox{ for all } \chi\in S_h^r\ , \\
&\mathcal{B}(\bu^h_t,\boldsymbol{\chi})+( \nabla(g\eta^h+\tfrac{1}{2}|\bu^h|^2),D^2\boldsymbol{\chi})=0,\quad \mbox{ for all } \boldsymbol{\chi}\in \bS_h^p\ ,
\end{aligned}
\end{equation}
for appropriate values of $r$ and $p$ 
and with the symmetric bilinear forms $\mathcal{A}$ and $\mathcal{B}$ defined as
\begin{align}
\mathcal{A}(\phi,\chi)&=(\phi,\chi)+\frac{1}{6}(D\nabla\phi,D\nabla\chi),\quad \mbox{ for } \phi,\chi\in S_h^r\ ,\\
\mathcal{B}(\boldsymbol{\phi},\boldsymbol{\chi})&=(D\boldsymbol{\phi},D\boldsymbol{\chi})+\frac{1}{6}(\Div (D^2\boldsymbol{\phi}),\Div (D^2\boldsymbol{\chi})) -\frac{1}{6}\langle \Div (D^2 \bphi) ,D^2 \bchi\cdot\bn\rangle \label{eq:bformu} \\
&\quad -\frac{1}{6} \langle D^2\bphi\cdot\bn,\Div(D^2 \bchi)\rangle +\frac{\gamma}{h} \langle D^2\boldsymbol{\phi}\cdot\bn,\boldsymbol{\chi}\cdot \bn\rangle,\quad \mbox{ for } \boldsymbol{\phi},\boldsymbol{\chi}\in \bS_h^p\ , \nonumber
\end{align}
where $\gamma/h\gg 1$, and 
$$\langle f,g\rangle =\int_{\partial\Omega} f g~ ds\ ,$$
is the usual $L^2$ inner product on the boundary $\partial\Omega$.
The system (\ref{eq:semidisc}) is also accompanied by smooth initial data $(\eta^h(\bx,0),\bu^h(\bx,0))=(\eta^h_0(\bx),\bu^h_0(\bx))$. The function $(\eta^h_0(\bx),\bu^h_0(\bx))$ can be taken as a projection or interpolant of the actual initial data $(\eta_0(\bx),\bu_0(\bx))$ onto $S_h^r\times \bS_h^p$. Note that we consider the problem in dimensional variables because apart from the fact that some parameters will depend on $\varepsilon$ and $\beta$ these parameters do not play any significant role in the numerical analysis of the problem. 

\begin{remark}
In addition to the inner product $\langle f,g\rangle$ we consider the norm $\|f\|_{\partial\Omega}=\sqrt{\langle f,f\rangle}$ whenever the trace of $f$ on $\partial\Omega$ makes sense, for example if $f\in H^1(\Omega)$, \cite{Brenner07}. 
\end{remark}
\begin{remark}
The first boundary integral term in (\ref{eq:bformu}) occurs because the space $\bS_h^p$ does not satisfy the slip-wall boundary condition of the continuous problem. On the other hand the next term is expected to be zero because the solution should satisfy the slip-wall boundary condition and makes the bilinear form symmetric. The third boundary integral term is the heart of Nitsche's method that forces the solution to satisfy the slip-wall boundary condition. The parameters in front of the boundary integral terms in (\ref{eq:bformu}) have been chosen equal so as to make the bilinear form symmetric. This does not affect the proofs in the sequel, though it is advantageous in terms of the matrix properties and linear systems solvers.
\end{remark}

\subsection{A Galerkin method for the incomplete-elliptic problem}\label{ellpiticproj}

Throughout this section we assume that the bottom satisfies the conditions of Theorem \ref{thrm:main2}. The specific weak formulation of the original problem is an adaptation of Nitsche's method. 
In order to analyze the specific finite element discretization 
we closely follow the ideas of \cite{Thomee}. We define the norm
$$\norma{\bu}=\left(\|\bu\|_\div^2+ h\|\Div \bu\|_{\partial\Omega}^2+h^{-1}\|\bu\cdot\bn\|_{\partial\Omega}^2\right)^{1/2}\ ,$$
for any $\bu\in H^\div$.  This norm is equivalent to $\|\Div \bu\|$ in $\bS_h^p$ since  (see \cite{monk})
\begin{equation}\label{eq:divinveq}
\|\bchi\cdot\bn\|_{\partial \Omega}\lesssim \|\Div\bchi\| \qquad \text{ and } 
\qquad \|\Div\bchi\|_{\partial\Omega}\leq C_0 h^{-1/2}\|\Div\bchi\| , \quad \forall \bchi \in \bS_h^p\ .
\end{equation}
Note that the hidden constants in the symbol $\lesssim$ are independent of $h$.
It is then straightforward to see that the symmetric bilinear form $\mathcal{B}$ is continuous and coercive. 
\begin{lemma}\label{lem:bilinear}
For sufficiently large value of $\gamma\gg 1$ and for any $\bphi,\bpsi\in\bS_h^p$ 
it can be shown that
\begin{equation}
|\mathcal{B}(\bphi,\bpsi)|\lesssim \norma{\bphi}\norma{\bpsi}\ , \qquad\text{continuity}, 
\end{equation}
and also
\begin{equation}
\mathcal{B}(\bphi,\bphi)\gtrsim \norma{\bphi}^2\ , \qquad\text{coercivity}.
\end{equation}
\end{lemma}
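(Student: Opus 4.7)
The plan is to prove both inequalities by expanding $\mathcal{B}$ into its four constituent pieces and controlling them separately via Cauchy--Schwarz, a scaled Young's inequality, and the inverse trace estimates in \eqref{eq:divinveq}, together with the standard discrete trace bound $\|\bchi\|_{\partial\Omega}\lesssim h^{-1/2}\|\bchi\|$ valid on $\bS_h^p$. Throughout, the boundedness $0<D_m\le D\le D_M$ and $|\nabla D^2|\le D'_M$ is used to pass between weighted and unweighted norms after expanding $\Div(D^2\bchi)=2D\nabla D\cdot\bchi+D^2\Div\bchi$.

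For continuity, I would apply Cauchy--Schwarz termwise. The interior pairings $(D\bphi,D\bpsi)$ and $\tfrac{1}{6}(\Div(D^2\bphi),\Div(D^2\bpsi))$ reduce to $\|\bphi\|\,\|\bpsi\|$ and $\|\bphi\|_{\div}\|\bpsi\|_{\div}$. For the cross boundary pairing I would split the $h$-weight symmetrically,
\begin{equation*}
|\langle\Div(D^2\bphi),D^2\bpsi\cdot\bn\rangle|\le \bigl(h^{1/2}\|\Div(D^2\bphi)\|_{\partial\Omega}\bigr)\bigl(h^{-1/2}\|D^2\bpsi\cdot\bn\|_{\partial\Omega}\bigr),
\end{equation*}
and bound the first factor by $\norma{\bphi}$ after expanding the divergence and using the assumed inverse trace on $\|\Div\bphi\|_{\partial\Omega}$ together with the discrete trace inequality to control $\|\bphi\|_{\partial\Omega}$; the second factor is bounded by $\norma{\bpsi}$ directly. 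The penalty term is treated analogously, $(\gamma/h)|\langle D^2\bphi\cdot\bn,\bpsi\cdot\bn\rangle|\lesssim \gamma\norma{\bphi}\norma{\bpsi}$, producing the continuity estimate with a constant proportional to $\gamma$.

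For coercivity, I would set $\bpsi=\bphi$, so that
\begin{equation*}
\mathcal{B}(\bphi,\bphi)=\|D\bphi\|^2+\tfrac{1}{6}\|\Div(D^2\bphi)\|^2-\tfrac{1}{3}\langle\Div(D^2\bphi),D^2\bphi\cdot\bn\rangle+\tfrac{\gamma}{h}\|D\bphi\cdot\bn\|^2_{\partial\Omega}.
\end{equation*}
A weighted Young's inequality with parameter $\epsilon>0$ splits the cross term into $(\epsilon h/6)\|\Div(D^2\bphi)\|^2_{\partial\Omega}$, which via the inverse trace becomes a fraction of $\|\Div(D^2\bphi)\|^2$ absorbable into the $\tfrac{1}{6}\|\Div(D^2\bphi)\|^2$ term, and $(1/(6\epsilon h))\|D^2\bphi\cdot\bn\|^2_{\partial\Omega}$, absorbable into the penalty provided $\gamma\gtrsim 1/\epsilon$. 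To recover the remaining pieces of $\norma{\bphi}^2$, I would use $\|\Div(D^2\bphi)\|^2\ge \tfrac{D_m^4}{2}\|\Div\bphi\|^2-C\|\bphi\|^2$ (from the expansion of the divergence) and the inverse trace $h\|\Div\bphi\|^2_{\partial\Omega}\lesssim\|\Div\bphi\|^2$ to extract the missing $\|\Div\bphi\|^2$ and $h\|\Div\bphi\|^2_{\partial\Omega}$ contributions, and the lower bound $\|D\bphi\|^2\ge D_m^2\|\bphi\|^2$ to absorb the $-C\|\bphi\|^2$ residual under the assumed mildness of the bottom variations.

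The main obstacle is the coupled choice of $\epsilon$ and $\gamma$: $\epsilon$ must be small enough that the $(\epsilon h/6)\|\Div(D^2\bphi)\|^2_{\partial\Omega}$ piece, once converted via the inverse trace, does not swallow the portion of $\tfrac{1}{6}\|\Div(D^2\bphi)\|^2$ needed to produce $\|\Div\bphi\|^2$ and $h\|\Div\bphi\|^2_{\partial\Omega}$; simultaneously $\gamma$ must grow at least like $1/\epsilon$ so that the Nitsche penalty controls the unfavourable boundary residual $(1/(6\epsilon h))\|D^2\bphi\cdot\bn\|^2_{\partial\Omega}$. A secondary subtlety is that the inverse trace inequalities \eqref{eq:divinveq}, stated on $\bS_h^p$, must be extended to smooth-function multiples such as $D^2\bphi$, which follows elementwise from the $L^\infty$ bounds on $D$ and $\nabla D$ assumed at the start of Section \ref{ellpiticproj}.
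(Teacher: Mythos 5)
Your proposal is correct and follows essentially the same route as the paper: termwise Cauchy--Schwarz with the $h^{\pm 1/2}$ splitting of the boundary pairings for continuity, and for coercivity a weighted Young's inequality on the boundary cross term combined with the inverse trace estimates \eqref{eq:divinveq} and a sufficiently large penalty parameter $\gamma$. The only cosmetic difference is that the paper expands $\Div(D^2\bphi)$ at once and tracks explicit constants $C_1,\dots,C_5$ depending on the bounds for $D$ and $\nabla D^2$, whereas you keep the weighted form longer and invoke the mild-bottom assumption at the end; both arguments rest on the same absorption mechanism.
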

\begin{proof}
By the definition of $\mathcal{B}$ we have
\begin{align*}
|\mathcal{B}(\bphi,\bpsi)| &\lesssim \|\bphi\|\|\bpsi\|+\|\Div\bphi\|\|\Div\bpsi\|
+ \|\bphi\|\|\Div\bpsi\|+\|\Div\bphi\|\|\bpsi\|+\\
&\quad + \|\bphi\|_{\partial\Omega}\|\bpsi\cdot\bn\|_{\partial\Omega} +\|\Div \bphi\|_{\partial\Omega} \|\bpsi\cdot \bn\|_{\partial\Omega}+ \|\bphi\cdot \bn\|_{\partial\Omega}\|\Div \bpsi\|_{\partial\Omega}+\|\bphi\cdot\bn\|_{\partial\Omega}\|\bpsi\|_{\partial\Omega} \\
&\quad +h^{-1}\|\bphi\cdot\bn\|_{\partial\Omega}\|\bpsi\cdot\bn\|_{\partial\Omega}\\
 &\lesssim \|\bphi\|^2+\|\bpsi\|^2+ \|\Div\bphi\|^2+\|\Div\bpsi\|^2+ \\
 &\quad+   h^{1/2}\| \bphi\|_{\partial\Omega} h^{-1/2} \|\bpsi\cdot \bn\|_{\partial\Omega}+ 
 h^{1/2}\|\Div \bphi\|_{\partial\Omega} h^{-1/2} \|\bpsi\cdot \bn\|_{\partial\Omega}+\\
 &\quad+  h^{-1/2}\|\bphi\cdot \bn\|_{\partial\Omega} h^{1/2}\|\Div \bpsi\|_{\partial\Omega} 
+h^{-1/2}\|\bphi\cdot\bn\|_{\partial\Omega}h^{-1/2}\|\bpsi\cdot\bn\|_{\partial\Omega}\\
&\lesssim \norma{\bphi}\norma{\bpsi}\ .
\end{align*}
The second inequality follows similarly from the definition of $\mathcal{B}$ and the norm $\norma{\cdot}$:
$$
\begin{aligned}
\mathcal{B}(\bphi,\bphi) &= \|D\bphi\|^2+\frac{1}{6}\|\Div (D^2\bphi)\|^2-\frac{1}{3} \langle D^2\bphi\cdot\bn,\Div(D^2 \bphi)\rangle+\frac{\gamma}{h}\|D\bphi\cdot \bn\|_{\partial\Omega}^2\\
&\geq C_1\|\bphi\|^2+C_2\|\Div\bphi\|^2-C_3\|\bphi\cdot\bn\|_{\partial\Omega}\|\bphi\|_{\partial\Omega}-C_4\|\bphi\cdot\bn\|_{\partial\Omega}\|\Div\bphi\|_{\partial\Omega}+ \gamma C_5 h^{-1}\|\bphi\cdot\bn\|^2_{\partial\Omega}\\
&\geq C_1\|\bphi\|^2+C_2\|\Div \bphi\|^2 -C_3\|\bphi\cdot\bn\|_{\partial\Omega}^2 - \frac{C_2}{2 C_0^2}h \|\Div\bphi\|^2_{\partial\Omega} - \frac{C_0^2 C_4^2}{2C_2} h^{-1}    \|\bphi\cdot\bn\|_{\partial\Omega}^2+\gamma C_5 h^{-1}\|\bphi\cdot\bn\|_{\partial\Omega}^2\\
&\geq C_1\|\bphi\|^2+C_2\|\Div \bphi\|^2-\frac{C_2}{2 C_0^2} h\|\Div\bphi\|^2_{\partial\Omega}+\left[\gamma C_5 - h C_3-\frac{C_0^2C_4^2}{2C_2}\right]h^{-1}\|\bphi\cdot\bn\|^2_{\partial\Omega},
\end{aligned}
$$
where the constants $C_i = C_i(D), \ i=0,1,\dots,7$ with $C_0$ the constant in (\ref{eq:divinveq}), $C_1$, $C_2$ as in Lemma \ref{lem:newbilinears}, $C_3=\frac{\beta}{3} D_M^3 D_M'$, $C_4=\frac{1}{3}D_M^4$ and $C_5=D_m^2$. By choosing appropriate $\gamma> 1$ and $h<1$, we can have  $C_6:=\gamma C_5 - h C_3-\frac{C_0^2C_4^2}{2C_2} > 0$.  Also, by denoting $C_7 = \frac{C_2}{4C_0^2}$, and using the second (inverse) inequality of (\ref{eq:divinveq}) we obtain
\begin{equation*}
\begin{aligned}
\mathcal{B}(\bphi,\bphi) &\geq C_1\|\bphi\|^2+\frac{C_2}{4}\|\Div \bphi\|^2+C_7 h\|\Div\bphi\|^2_{\partial\Omega}+C_6 h^{-1}\|\bphi\cdot\bn\|^2_{\partial\Omega}\\
& \gtrsim\|\bphi\|^2+\|\Div \bphi\|^2+h\|\Div\bphi\|_{\partial\Omega}^2+ h^{-1}\|\bphi\cdot \bn\|_{\partial\Omega}^2 \\
&=\norma{\bphi}^2\ .
\end{aligned}
\end{equation*}
This completes the proof which shows that the bilinear form $\mathcal{B}$ is continuous and coercive.
\end{proof}

\begin{remark}
For sufficiently small $h< 1$ we have
\begin{equation}\label{eq:neqinvers}
\norma{\bchi}\lesssim h^{-1/2}\|\bchi\|_\div,  \  \forall \bchi\in\bS_h^p\ .
\end{equation}
\end{remark}
We will also need the following lemmata generalizing related results of \cite{Thomee} for vector valued functions:
\begin{lemma}\label{lem:compar}
If $\bw\in \bH^1\cap H^\div_{s,0}$ with $s\geq 1$ and $\bv=\bw-\bchi$ for $\bchi\in\bS_h^p$, then
$$\norma{\bv}\lesssim h^{-1}\left(\|\bv\|+h\|\bv\|_1+h^2|\bv|_{2,h} \right)\ ,$$
where $|\bv|_{2,h}$ denotes the norm
$$|\bv|_{2,h}=\left(\sum_{j=1}^{N_h}\|\nabla\Div\bv\|_{\bL^2(\tau_j)} \right)^{1/2}\ .$$
\end{lemma}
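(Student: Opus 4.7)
The plan is to bound each of the three summands in
\[
\norma{\bv}^2 = \|\bv\|_\div^2 + h\|\Div\bv\|_{\partial\Omega}^2 + h^{-1}\|\bv\cdot\bn\|_{\partial\Omega}^2
\]
by the quantity $R^2 := h^{-2}\|\bv\|^2 + \|\bv\|_\div^2 + h^2|\bv|_{2,h}^2$, which is equivalent up to a multiplicative constant to the square of the right-hand side in the statement. The piece $\|\bv\|_\div^2$ is already a summand of $R^2$, so the entire argument reduces to two trace-type estimates on $\partial\Omega$. I expect the main obstacle to be the normal-trace term, because $\bv = \bw - \bchi$ only has $H(\div)$ regularity (not componentwise $H^1$), so a naive application of the scalar trace inequality is unavailable.

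For the divergence-trace term I would exploit the fact that $\Div\bv = \Div\bw - \Div\bchi$ lies in $H^1(\tau_j)$ on every element: $\Div\bw \in H^s(\Omega) \subset H^1(\Omega)$ since $s \geq 1$, and $\Div\bchi$ is a polynomial on each $\tau_j$. The standard scaled elementwise trace inequality
\[
\|\Div\bv\|_{\partial\tau_j}^2 \lesssim h^{-1}\|\Div\bv\|_{\tau_j}^2 + h\|\nabla\Div\bv\|_{\bL^2(\tau_j)}^2,
\]
summed over the elements adjacent to $\partial\Omega$, yields $h\|\Div\bv\|_{\partial\Omega}^2 \lesssim \|\Div\bv\|^2 + h^2|\bv|_{2,h}^2 \lesssim R^2$.

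The normal-trace term is the heart of the proof. The decisive structural fact is that, because $\bw \in H^\div_{s,0}$ satisfies $\bw\cdot\bn = 0$ on $\partial\Omega$, on every boundary edge $e = \partial\tau \cap \partial\Omega$ one has $\bv\cdot\bn|_e = -\bchi\cdot\bn|_e$, which is a polynomial of degree at most $p$. I would test against $\phi = \tilde q\, b_e$, where $\tilde q$ is a polynomial extension of $\bv\cdot\bn|_e$ into $\tau$ (for instance, constant in the direction normal to $e$) and $b_e = \lambda_1\lambda_2$ is the standard edge bubble that vanishes on the two edges of $\tau$ other than $e$. Since $\phi$ vanishes on $\partial\tau \setminus e$, integration by parts on $\tau$ gives
\[
\int_e (\bv\cdot\bn)^2\, b_e\, ds = \int_\tau \Div\bv\,\phi\, dx + \int_\tau \bv\cdot\nabla\phi\, dx.
\]
Mapping to a reference element and using finite-dimensional norm equivalences on polynomials of bounded degree produce the scaling estimates $\|\phi\|_\tau \lesssim h^{1/2}\|\bv\cdot\bn\|_{L^2(e)}$ and $\|\nabla\phi\|_\tau \lesssim h^{-1/2}\|\bv\cdot\bn\|_{L^2(e)}$, together with the weighted Friedrichs-type inequality $\int_e (\bv\cdot\bn)^2 b_e\, ds \gtrsim \|\bv\cdot\bn\|_{L^2(e)}^2$ (valid because $\bv\cdot\bn|_e$ is a polynomial of bounded degree). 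Applying Cauchy--Schwarz to the right-hand side and dividing out one factor of $\|\bv\cdot\bn\|_{L^2(e)}$ yields
\[
\|\bv\cdot\bn\|_{L^2(e)} \lesssim h^{1/2}\|\Div\bv\|_\tau + h^{-1/2}\|\bv\|_\tau,
\]
and squaring and summing over boundary edges gives $h^{-1}\|\bv\cdot\bn\|_{\partial\Omega}^2 \lesssim \|\bv\|_\div^2 + h^{-2}\|\bv\|^2 \lesssim R^2$. Adding the three bounds and taking square roots delivers the claimed inequality. The technical crux of the argument is therefore the construction of the polynomial extension $\phi$ with the right $h$-scaling, which is only possible because the boundary condition on $\bw$ reduces $\bv\cdot\bn|_e$ to a polynomial of bounded degree on each boundary edge.
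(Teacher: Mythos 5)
Your proof is correct, and it diverges from the paper's argument in the one place where divergence matters. The treatment of the divergence-trace term is identical to the paper's: the scaled elementwise trace inequality applied to $\Div\bv\in H^1(\tau_j)$ gives $h\|\Div\bv\|^2_{\partial\Omega}\lesssim\|\Div\bv\|^2+h^2|\bv|^2_{2,h}$, which is exactly the paper's second displayed fact. For the normal-trace term, however, the paper argues globally via $\|\bv\cdot\bn\|^2_{\partial\Omega}=\|\bchi\cdot\bn\|^2_{\partial\Omega}\lesssim\|\bchi\|^2_{\partial\Omega}\lesssim h^{-1}\|\bchi\|^2$ and then invokes the trace inequality $\|v\|^2_{\partial\Omega}\lesssim\|v\|\,\|v\|_1$ — a chain that ends with $\|\bchi\|$ rather than $\|\bv\|$ and whose final conversion is left implicit (and is not immediate here, since $\bv$ has only $H(\div)$ regularity, so the scalar trace inequality used in the analogous Lemma 2.3 of Thom\'ee is not available componentwise). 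Your edge-bubble duality argument replaces this step entirely: you exploit $\bw\cdot\bn=0$ only to identify $\bv\cdot\bn|_e$ with the polynomial $-\bchi\cdot\bn|_e$, then localize to each boundary element, integrate by parts against $\phi=\tilde q\,b_e$, and use the standard Verf\"urth-type scalings $\|\phi\|_\tau\lesssim h^{1/2}\|\bv\cdot\bn\|_{L^2(e)}$, $\|\nabla\phi\|_\tau\lesssim h^{-1/2}\|\bv\cdot\bn\|_{L^2(e)}$, and $\int_e(\bv\cdot\bn)^2b_e\gtrsim\|\bv\cdot\bn\|^2_{L^2(e)}$ to land directly on $h^{-1}\|\bv\cdot\bn\|^2_{\partial\Omega}\lesssim h^{-2}\|\bv\|^2+\|\bv\|^2_\div$. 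This buys a self-contained estimate expressed purely in terms of $\bv$, at the cost of the extra machinery of polynomial extensions and bubble functions; it is arguably the more complete argument in the $H(\div)$ setting, whereas the paper's sketch leans on the reader to supply the missing absorption of $\|\bchi\|$.
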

\begin{proof}
The proof follows from the facts that
$$h^{-1}\|\bv\cdot\bn\|^2_{\partial\Omega}\lesssim h^{-2}\|\bv\|^2+\|\bv\|^2_1\ ,$$
and
$$h\|\Div\bv\|^2_{\partial\tau}\lesssim \|\Div\bv\|^2_\tau+h^2\|\nabla\Div\bv\|^2_\tau\ ,$$
for $\tau\in\mathcal{T}_h$.
These inequalities can be proved using the trace inequality $\|v\|^2_{\partial\Omega}\lesssim \|v\|\|v\|_1$ of \cite{Brenner07} and the estimate $\|v\|_1\lesssim h^{-1}\|v\|+\|\nabla v\|$,  (see Lemma 2.3 of \cite{Thomee}).
\end{proof}

Now, we obtain the basic approximation property of the space $\bS_h^p$.

\begin{lemma}\label{lem:approxb}
The space $\bS_h^p$ equipped with the norm $\norma{\cdot}$ satisfies the following approximation property:
\begin{equation}
\inf_{\bchi\in\bS_h^p}\norma{\bw-\bchi}\lesssim h^{s-1}\|\bw\|_{s}, \quad \mbox{ for } ~ \bw\in \bH^{s}_0,\quad 2\leq s\leq p\ .
\end{equation}
\end{lemma}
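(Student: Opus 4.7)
The plan is to apply Lemma~\ref{lem:compar} with a concrete, well-chosen approximant $\bchi\in\bS_h^p$ and then reduce everything to the standard componentwise approximation property~\eqref{eq:femident} of the scalar space $S_h^p$.

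First I would take $\bchi$ to be the componentwise interpolant (or $L^2$-projection) of $\bw$ onto $\bS_h^p=S_h^p\times S_h^p$, so that $\bv=\bw-\bchi$ satisfies the two standard estimates
\begin{equation*}
\|\bv\|\lesssim h^{s}\|\bw\|_s,\qquad \|\bv\|_1\lesssim h^{s-1}\|\bw\|_s,
\end{equation*}
from \eqref{eq:femident} (applied to each component), under the hypothesis $s\ge 2\le r$. From these one immediately gets $\|\bv\|_{\div}\le \|\bv\|+\|\Div\bv\|\lesssim \|\bv\|+\|\bv\|_1\lesssim h^{s-1}\|\bw\|_s$, which handles the bulk term in Lemma~\ref{lem:compar}.

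Next I would bound the broken seminorm $|\bv|_{2,h}$. Since on each triangle $\tau$ the interpolant satisfies the usual local Bramble–Hilbert estimate $\|\bw-\bchi\|_{H^2(\tau)}\lesssim h^{s-2}\|\bw\|_{H^s(\tau)}$ (recalling $s\ge 2$ and $p\ge 2$, so that second derivatives of $\bchi$ are well defined piecewise), summing over the triangulation yields
\begin{equation*}
|\bv|_{2,h}\lesssim h^{s-2}\|\bw\|_s.
\end{equation*}
Combining the three pieces in the estimate of Lemma~\ref{lem:compar} gives
\begin{equation*}
\norma{\bv}\lesssim h^{-1}\bigl(h^{s}\|\bw\|_s+h\cdot h^{s-1}\|\bw\|_s+h^2\cdot h^{s-2}\|\bw\|_s\bigr)\lesssim h^{s-1}\|\bw\|_s,
\end{equation*}
which is the claimed bound.

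The main subtlety, and the step I would check most carefully, is that $\bw\in\bH^{s}_0$ implies $\bw\cdot\bn=0$ on $\partial\Omega$, but the componentwise interpolant $\bchi$ will in general \emph{not} satisfy this boundary condition exactly. This is precisely why the result has to be stated for the Nitsche norm $\norma{\cdot}$ rather than the conforming norm on $H^{\div}_0$: the boundary mismatch $\|(\bw-\bchi)\cdot\bn\|_{\partial\Omega}$ is controlled weakly through the $h^{-1/2}$-weighted trace term, and the proof of Lemma~\ref{lem:compar} already shows how the inverse trace inequality absorbs this term into the bulk $L^2$ estimate at a cost of one factor of $h^{-1}$. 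Once this is in place, no further boundary modification of $\bchi$ is needed, and the estimate follows cleanly.
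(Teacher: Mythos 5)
Your proposal is correct and follows essentially the same route as the paper: choose an interpolant into $\bS_h^p$, establish the combined estimate $\|\bv\|+h\|\bv\|_{\div}+h^{2}|\bv|_{2,h}\lesssim h^{s}\|\bw\|_{s}$, and conclude via Lemma~\ref{lem:compar}. The only difference is cosmetic — the paper cites the interpolant estimates from Thom\'ee wholesale, whereas you assemble them componentwise from \eqref{eq:femident} and a local Bramble--Hilbert bound (just note the implicit restriction $2\le s\le p$ and that the nodal interpolant, rather than the $L^2$-projection, is the safe choice for the elementwise second-derivative estimate).
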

\begin{proof}
It is known, \cite{Thomee}, that there is an interpolant $\bI_h$ into $\bS_h^p$ that satisfies
$$\|\bw-\bI_h\bw\|+h\|\bw-\bI_h\bw\|_1+h^2|\bw-\bI_h\bw|_{2,h}\lesssim h^{s}\|\bw\|_{s}, \quad \mbox{ for }~\bw\in \bH^{s},\quad 2\leq s\leq p\ .$$
We therefore then conclude that
\begin{align*}
\inf_{\bchi\in\bS_h^r}\norma{\bw-\bchi} &\lesssim \norma{\bw- \bI_h\bv} \\
&\lesssim h^{-1} \left(\|\bw-\bI_h\bw\|+h\|\bw-\bI_h\bw\|_1+h^2|\bw- \bI_h\bw|_{2,h}\right)\\
&\lesssim h^{s-1}\|\bw\|_{s}\ ,
\end{align*}
which completes the proof.
\end{proof}

Coming back to the semidiscrete problem, we  consider  only  initial conditions of the form
\begin{equation}
(\eta_0^h(\bx),\bu_0^h(\bx))=(R_h\eta_0(\bx), \bR_h\bu_0(\bx))\ ,
\end{equation}
where $R_h:H^1\to S_h^r$ and $\bR_h:H^\div\to \bS_h^p$ are the elliptic projections 
onto $S_h^r$ and $\bS_h^p$ respectively, defined as follows
\begin{align}
&\mathcal{A}(R_h w,\chi)=\mathcal{A}(w,\chi),\quad \forall  w\in H^1,~\chi\in S_h^r\ ,\\
&\mathcal{B}(\bR_h \bw,\boldsymbol{\chi})=\mathcal{B}(\bw,\boldsymbol{\chi}),\quad \forall\bw\in H^\div,~\boldsymbol{\chi}\in \bS_h^p\ .
\end{align}
As a consequence of (\ref{eq:femident}) and Lemma \ref{lem:approxb} we have that
\begin{equation}
\|w-R_hw\|_k\lesssim h^{s-k}\|w\|_s,\quad w\in H^s,~ 1\leq s\leq r,~ k=0,1\ ,
\end{equation}
while for $\bR_h$ we have the following error estimate (see also \cite{Thomee}):
\begin{proposition}\label{prop:3.1}
If $\bw\in \bH^{s}_0$ and $\bR_h\bw$ is the projection defined as
$$\mathcal{B}(\bR_h\bw,\bchi)=\mathcal{B}(\bw,\bchi),\quad \mbox{ for all }~ \bchi \in \bS_h^p\ ,$$
then for $2\leq s\leq p$ it holds
\begin{equation}\label{eq:e57}
\norma{\bw-\bR_h\bw}\lesssim h^{s-1}\|\bw\|_{s}\ .
\end{equation}
\end{proposition}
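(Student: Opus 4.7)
The proof strategy is the standard Céa-type best-approximation argument tailored to the Nitsche-augmented bilinear form $\mathcal{B}$, combined with the approximation estimate of Lemma~\ref{lem:approxb}.

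\textbf{Step 1: Galerkin orthogonality.} By the very definition of the elliptic projection, $\mathcal{B}(\bw-\bR_h\bw,\bchi)=0$ for every $\bchi\in\bS_h^p$. I would use this together with coercivity: for any $\bchi\in\bS_h^p$, the difference $\bR_h\bw-\bchi$ lies in $\bS_h^p$, so by Lemma~\ref{lem:bilinear},
$$
\norma{\bR_h\bw-\bchi}^2 \lesssim \mathcal{B}(\bR_h\bw-\bchi,\bR_h\bw-\bchi) = \mathcal{B}(\bw-\bchi,\bR_h\bw-\bchi),
$$
where the last equality is Galerkin orthogonality applied to $\bchi-\bR_h\bw\in\bS_h^p$.

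\textbf{Step 2: Continuity applied across the discrete/continuous divide.} I would then bound the right-hand side by $\norma{\bw-\bchi}\,\norma{\bR_h\bw-\bchi}$. This is where the main obstacle lies, because Lemma~\ref{lem:bilinear} only states continuity within $\bS_h^p$. I would extend continuity to $(\bw-\bchi)\in\bH^s_0+\bS_h^p$ by inspecting the proof of Lemma~\ref{lem:bilinear}: the bulk terms are handled by Cauchy–Schwarz directly in $\bL^2$ and $L^2$ without any inverse inequality, while the boundary terms $\|\Div(\bw-\bchi)\|_{\partial\Omega}$ and $\|(\bw-\bchi)\cdot\bn\|_{\partial\Omega}$ are handled by the elementwise trace estimate used in the proof of Lemma~\ref{lem:compar}, namely $h\|\Div\bv\|_{\partial\tau}^2\lesssim \|\Div\bv\|_\tau^2+h^2\|\nabla\Div\bv\|_\tau^2$ summed over $\tau\in\Th{h}$. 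This produces the bound
$$
\mathcal{B}(\bw-\bchi,\bR_h\bw-\bchi)\lesssim \bigl(\norma{\bw-\bchi}+h\,|\bw-\bchi|_{2,h}\bigr)\,\norma{\bR_h\bw-\bchi},
$$
which, after dividing by $\norma{\bR_h\bw-\bchi}$, yields $\norma{\bR_h\bw-\bchi}\lesssim \norma{\bw-\bchi}+h\,|\bw-\bchi|_{2,h}$.

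\textbf{Step 3: Triangle inequality and best approximation.} Combining with the triangle inequality,
$$
\norma{\bw-\bR_h\bw}\leq \norma{\bw-\bchi}+\norma{\bR_h\bw-\bchi}\lesssim \norma{\bw-\bchi}+h\,|\bw-\bchi|_{2,h},
$$
valid for every $\bchi\in\bS_h^p$. Taking $\bchi=\bI_h\bw$ (the interpolant of Lemma~\ref{lem:approxb}) and using its error bound
$\|\bw-\bI_h\bw\|+h\|\bw-\bI_h\bw\|_\div+h^2|\bw-\bI_h\bw|_{2,h}\lesssim h^s\|\bw\|_s$ together with the comparison in Lemma~\ref{lem:compar}, both $\norma{\bw-\bI_h\bw}$ and $h\,|\bw-\bI_h\bw|_{2,h}$ are controlled by $h^{s-1}\|\bw\|_s$, yielding \eqref{eq:e57}.

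The main obstacle, as indicated, is the extension of the continuity bound of $\mathcal{B}$ to arguments outside $\bS_h^p$. The cleanest way is to write $\bw-\bR_h\bw=(\bw-\bI_h\bw)+(\bI_h\bw-\bR_h\bw)$ from the outset, apply coercivity only to the second (discrete) piece, and hit the mixed term $\mathcal{B}(\bw-\bI_h\bw,\bI_h\bw-\bR_h\bw)$ with a tailored Cauchy–Schwarz in which only the discrete factor uses the inverse inequality; this avoids any need to define $\norma{\cdot}$ on non-discrete functions beyond the elementwise trace terms already present in the proof of Lemma~\ref{lem:compar}.
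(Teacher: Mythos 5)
Your proposal is correct and follows essentially the same route as the paper: triangle inequality, coercivity plus Galerkin orthogonality to reduce to a best-approximation bound, continuity of $\mathcal{B}$, and then Lemma~\ref{lem:approxb} with the interpolant $\bI_h$. The only difference is that your Step~2 explicitly justifies applying the continuity estimate to the non-discrete argument $\bw-\bchi$ via elementwise trace inequalities — a detail the paper's proof uses without comment — so your version is, if anything, slightly more careful.
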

\begin{proof}
 For any $\bchi\in\bS_h^p$ we have
$$\norma{\bw-\bR_h\bw}\leq\norma{\bw-\bchi}+\norma{\bchi-\bR_h\bw}\ .$$
From Lemma \ref{lem:bilinear} we have
\begin{align*}
\norma{\bchi-\bR_h\bw}^2&\lesssim \mathcal{B}(\bchi-\bR_h\bw,\bchi-\bR_h\bw) \\
&\lesssim \mathcal{B}(\bchi-\bw,\bchi-\bR_h\bw) \\
&\lesssim \norma{\bchi-\bw}\norma{\bchi-\bR_h\bw}\ .
\end{align*}
Thus $\norma{\bchi-\bR_h\bw}\lesssim \norma{\bchi-\bw}$, and so, by Lemma \ref{lem:approxb} we have
$$\norma{\bw-\bR_h\bw}\lesssim \inf_{\bchi\in\bS_h^p}\norma{\bw-\bchi}\lesssim h^{s-1}\|\bw\|_{s}\ ,$$
which completes the proof.
\end{proof}

\begin{remark}
By the definition of the norm $\norma{\bw}$ for any $\bw\in \bH^1$, we have that 
\begin{equation}\label{eq:helpineq}
\|\bw\cdot\bn\|_{\partial\Omega}\lesssim h^{1/2}\norma{\bw}\ .
\end{equation}
If $\bw$ is such that $\bw\cdot\bn=0$ on $\partial \Omega$, we can see that although 
the elliptic projection does not satisfy $\bR_h\bw\cdot \bn=0$, it converges to 0 as $h\to 0$. Indeed, we have
$$\|\bR_h\bw\cdot \bn\|_{\partial\Omega}\lesssim \|(\bR_h\bw-\bw)\cdot\bn\|_{\partial\Omega}\lesssim h^{1/2}\norma{\bR_h\bw-\bw}\lesssim h^{3/2}\|\bw\|_{2}\ .$$

\end{remark}

\subsection{Standard Galerkin method for the BBM-BBM system}

We consider now the BBM-BBM system (\ref{eq:varbottopeps}) with boundary conditions $\bu\cdot\bn=0$ and $\nabla \eta\cdot \bn=0$ on $\partial \Omega$ and smooth initial conditions (\ref{eq:ics}). Without loss of generality and for economy in notation we take $\varepsilon=1$. The Galerkin finite element method semidiscretization problem is defined as follows: \\
Seek an approximate solution $(\eta^h,\bu^h)\in S_h^r\times \bS_h^p$ such that
\begin{equation}\label{eq:semidisc2}
\begin{aligned}
&\mathcal{A}(\eta^h_t,\chi)-( (D+\eta^h)\bu^h,\nabla \chi)=0,\quad \forall\chi\in S_h^r\ , \\
&\mathcal{B}(\bu^h_t,\boldsymbol{\chi})+( \nabla(\eta^h+\tfrac{1}{2}|\bu^h|^2), D^2 \boldsymbol{\chi}) =0,\quad \forall\boldsymbol{\chi}\in \bS_h^p\ ,
\end{aligned}
\end{equation}
where the symmetric bilinear forms $\mathcal{A}$ and $\mathcal{B}$ are defined as before, 
and with initial data 
$$(\eta^h_0,\bu_0^h)=(R_h\eta_0(\bx),\bR_h\bu_0(\bx))\ ,$$
where $R_h$ is the elliptic projection defined as
$$\mathcal{A}(R_h\eta_0,\chi)=\mathcal{A}(\eta_0,\chi),\quad \mbox{ for all }~\chi\in S_h^r\ ,$$
and $\bR_h$ is the elliptic projections defined in Section \ref{ellpiticproj}.

As in Section \ref{sec:wellposed} we define the functions $f_h:\bL^2\to S_h^r$ and $g_h: H^1\to \bS_h^p$ such that
\begin{equation}
\mathcal{A}(f_h(\bw),\chi)=(\bw,\nabla\chi), \quad \mbox{ for all }~ \chi\in S_h^r\ ,
\end{equation}
and
\begin{equation}\label{eq:es66}
\mathcal{B}(g_h(w),\bchi)=-(\nabla w, D^2 \bchi), \quad \mbox{ for all }~ \bchi \in \bS_h^p\ .
\end{equation}
These functionals help us to write the semidiscrete problem in the form of
a system of ordinary differential equations
\begin{equation}\label{eq:semidode}
\left\{
\begin{aligned}
\eta^h_t&=f_h((D+\eta^h)\bu^h)\ ,\\
\bu^h_t&=g_h(\eta^h+|\bu^h|^2)\ .
\end{aligned}\right.
\end{equation}
This system also enjoys favourable stability properties:
\begin{lemma}\label{lem:lem3.3}
\begin{enumerate}
\item[(i)] For any $\bw\in \bL^2$ we have the inequality 
\begin{equation}
\|f_h(\bw)\|_1\lesssim \|\bw\|\ .
\end{equation}
\item[(ii)] For $w \in H^1$, we also have
\begin{equation}
\norma{g_h(w)} \lesssim \|w\|+h^{1/2}\|w\|_{\partial\Omega}\ .
\end{equation}
\end{enumerate}
\end{lemma}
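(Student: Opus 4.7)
The plan is to test each defining identity with the function we want to estimate, use coercivity to replace the bilinear form by the target norm, and then bound the resulting linear functional by Cauchy--Schwarz, combined with an integration by parts in case (ii). Part (i) is a standard FEM estimate; part (ii) is where the Nitsche terms in the definition of the triple norm $\norma{\cdot}$ play their role.

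For (i), because $0<D_m\le D(\bx)\le D_M$, the form $\mathcal{A}$ is uniformly equivalent to the $H^1$ inner product, so $\mathcal{A}(\chi,\chi)\gtrsim\|\chi\|_1^2$ on $S_h^r$. Choosing $\chi=f_h(\bw)$ in the defining identity gives $\|f_h(\bw)\|_1^2\lesssim (\bw,\nabla f_h(\bw))\le\|\bw\|\,\|f_h(\bw)\|_1$, and dividing yields the claim.

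For (ii), I would test \eqref{eq:es66} with $\bchi=g_h(w)$ and apply the triple-norm coercivity from Lemma \ref{lem:bilinear} to get $\norma{g_h(w)}^2\lesssim |(\nabla w,D^2 g_h(w))|$. Since $g_h(w)\cdot\bn$ need not vanish on $\partial\Omega$, I would integrate by parts to relocate the gradient,
$$
(\nabla w, D^2 g_h(w)) \;=\; -\bigl(w,\Div(D^2 g_h(w))\bigr)+\langle w, D^2 g_h(w)\cdot\bn\rangle.
$$
The volume contribution is bounded by $\|w\|\,\norma{g_h(w)}$ after expanding $\Div(D^2 g_h(w))=\nabla D^2\cdot g_h(w)+D^2\Div g_h(w)$ and using the assumed bounds on $D$ and $|\nabla D^2|$, since $\|g_h(w)\|$ and $\|\Div g_h(w)\|$ are both dominated by $\norma{g_h(w)}$. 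For the boundary term I would split
$$
|\langle w, D^2 g_h(w)\cdot\bn\rangle|\;\lesssim\; h^{1/2}\|w\|_{\partial\Omega}\cdot h^{-1/2}\|g_h(w)\cdot\bn\|_{\partial\Omega}\;\lesssim\; h^{1/2}\|w\|_{\partial\Omega}\,\norma{g_h(w)},
$$
using that $h^{-1/2}\|\bchi\cdot\bn\|_{\partial\Omega}$ is one of the three ingredients of $\norma{\bchi}$. Collecting the two bounds and dividing by $\norma{g_h(w)}$ gives (ii).

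The main obstacle is precisely the boundary term: because $\bS_h^p$ does not impose $\bchi\cdot\bn=0$ strongly, the integration by parts leaves a boundary residual that can only be absorbed thanks to the Nitsche-style weight $h^{-1/2}$ built into the triple norm. This is why the estimate carries the extra $h^{1/2}\|w\|_{\partial\Omega}$ rather than the cleaner $\|w\|$ appearing in (i), and it is exactly this $h^{1/2}$ factor that later allows the inconsistency introduced by the weak enforcement of the slip-wall condition to be controlled.
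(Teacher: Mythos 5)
Your proposal is correct and follows essentially the same route as the paper: coercivity of $\mathcal{A}$ plus Cauchy--Schwarz for (i), and for (ii) coercivity of $\mathcal{B}$ in the triple norm, integration by parts, and the $h^{1/2}$/$h^{-1/2}$ splitting of the boundary term against the Nitsche component of $\norma{\cdot}$. Your treatment is in fact slightly more explicit than the paper's, since you carry the $D^2$ weight through the divergence product rule rather than absorbing it silently into the constants.
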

\begin{proof}
First we have
$$
\|f_h(\bw)\|_1^2 \leq \mathcal{A}(f_h(\bw),f_h(\bw))=(\bw,\nabla f_h(\bw))\lesssim \|\bw\|\|f_h(\bw)\|_1\ ,
$$
which implies that $\|f_h(\bw)\|_1\lesssim \|\bw\|$. For (ii) the situation is very similar:
\begin{equation*}
\begin{aligned}
\norma{g_h(w)}^2&\lesssim \mathcal{B}(g_h(w),g_h(w))
\lesssim |(\nabla w, g_h(w))|
\lesssim |(w,\Div g_h(w))|+|\langle w, g_h(w)\cdot \bn\rangle|\\
&\lesssim \|w\|\|g_h(w)\|_\div+\|w\|_{\partial\Omega}\|g_h(w)\cdot\bn\|_{\partial\Omega}\\
&\lesssim \|w\|\norma{g_h(w)}+\|w\|_{\partial\Omega}h^{1/2}(h^{-1/2}\|g_h(w)\cdot\bn\|_{\partial\Omega})\\
&\lesssim (\|w\|+h^{1/2}\|w\|_{\partial\Omega}) \norma{g_h(w)}\ ,
\end{aligned}
\end{equation*}
which implies the desired estimate.
\end{proof}

\begin{lemma}\label{lem:lem3.4}
For $w \in H^1$ we have
\begin{equation}
\|g_h(w)\|_{\div}\lesssim h\|w\|_1+\|w\|\ .
\end{equation}
\end{lemma}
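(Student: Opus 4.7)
The plan is to read off the $\Hd{}$-norm bound as a direct corollary of the triple-norm stability estimate in Lemma~\ref{lem:lem3.3}(ii), combined with a sharp trace inequality followed by Young's inequality.

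First, I would observe that by the very definition of the triple norm,
$$\|g_h(w)\|_{\div}^2 \le \|g_h(w)\|_{\div}^2 + h\|\Div g_h(w)\|_{\partial\Omega}^2 + h^{-1}\|g_h(w)\cdot\bn\|_{\partial\Omega}^2 = \norma{g_h(w)}^2,$$
so it suffices to estimate $\norma{g_h(w)}$. Applying Lemma~\ref{lem:lem3.3}(ii) yields
$$\|g_h(w)\|_{\div}\;\le\;\norma{g_h(w)}\;\lesssim\;\|w\|+h^{1/2}\|w\|_{\partial\Omega}.$$

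The remaining task is to absorb the boundary term. For $w\in H^1(\Omega)$ the standard sharp trace inequality (already invoked in the proof of Lemma~\ref{lem:compar}) gives $\|w\|_{\partial\Omega}^2 \lesssim \|w\|\,\|w\|_1$. Hence
$$h^{1/2}\|w\|_{\partial\Omega}\;\lesssim\;h^{1/2}\|w\|^{1/2}\|w\|_1^{1/2}\;\le\;\tfrac{1}{2}\bigl(\|w\|+h\|w\|_1\bigr),$$
where the last step is Young's inequality applied to $a=\|w\|^{1/2}$ and $b=h^{1/2}\|w\|_1^{1/2}$. Putting the two chains together produces the asserted bound $\|g_h(w)\|_{\div}\lesssim \|w\|+h\|w\|_1$.

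I do not anticipate any real obstacle here: the lemma is a corollary packaging of Lemma~\ref{lem:lem3.3}(ii) with a trace estimate. The only thing worth being careful about is that the correct power of $h$ (namely $h^1$, not $h^{1/2}$) appears in front of $\|w\|_1$, which is exactly why the multiplicative trace $\|w\|_{\partial\Omega}^2\lesssim \|w\|\,\|w\|_1$ must be used in place of the cruder $\|w\|_{\partial\Omega}\lesssim \|w\|_1$.
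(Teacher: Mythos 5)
Your argument is correct, but it is a genuinely different (and shorter) route than the paper's. You treat the lemma as a corollary of Lemma \ref{lem:lem3.3}(ii): since $\|g_h(w)\|_\div\le\norma{g_h(w)}$ by definition of the triple norm, the stability bound $\norma{g_h(w)}\lesssim\|w\|+h^{1/2}\|w\|_{\partial\Omega}$ combined with the multiplicative trace inequality $\|w\|_{\partial\Omega}^2\lesssim\|w\|\,\|w\|_1$ (already used in Lemma \ref{lem:compar}) and Young's inequality gives exactly $\|w\|+h\|w\|_1$; all steps are legitimate and the constants are $h$-independent, so the proof stands. The paper instead argues through the continuous incomplete-elliptic problem: it introduces $\bv$ with $L\bv=\bv-\tfrac16\nabla(\Div(D^2\bv))=\nabla w$ and $\bv\cdot\bn=0$, shows by testing/duality that $\|\bv\|_\div\lesssim\|w\|$, identifies $g_h(w)=\bR_h\bv$, and then uses the curl-free regularity shift $\|\bv\|_2\lesssim\|\bv\|_{1,\div}\lesssim\|w\|_1$ together with the $O(h)$ approximation property of the elliptic projection to bound $\|\bR_h\bv-\bv\|_\div\lesssim h\|w\|_1$, concluding by the triangle inequality. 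Your version buys simplicity: it avoids the elliptic-regularity machinery (smooth/convex domain, $\Curl\bv=0$) and the identification $g_h(w)=\bR_h\bv$ altogether, relying only on discrete stability plus a trace estimate. The paper's version buys structural information that is reusable elsewhere --- the interpretation of $g_h(w)$ as the Ritz projection of the continuous solution and the bound $\|\bv\|_\div\lesssim\|w\|$ --- and it would yield higher-order improvements under additional regularity of $w$ via Proposition \ref{prop:3.1}, whereas your trace-plus-Young step caps the rate at $O(h)$; for the estimate as stated, however, the two are equally sharp.
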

\begin{proof}
Let $\bv$ be the solution of the system $L\bv=\nabla w$ with boundary condition $\bv\cdot\bn=0$, where the operator $L:H^\div_{1,0}\to L^2$ is such that $L\bv=\bv-\frac{1}{6}\nabla(\Div (D^2\bv))$. Assuming that  we have
\begin{equation*}
\|\nabla w\|_{-\div} =
\sup_{\substack{\bz\in H^\div_0 \\ \bz \ne 0}}\frac{(\nabla w,\bz)}{\|\bz\|_\div}
=\sup_{\substack{\bz\in H^\div_0 \\ \bz \ne 0 }}\frac{(L\bv,\bz)}{\|\bz\|_\div}\geq \frac{(L\bv,\bv)}{\|\bv\|_\div}
\geq C\frac{\|\bv\|_\div^2}{\|\bv\|_\div}\ ,
\end{equation*}
and thus $\|\bv\|_\div\lesssim \|\nabla w\|_{-\div}$. Moreover, for $\bz\in H^\div_0$ we have
\begin{equation*}
\frac{(\nabla w,\bz)}{\|\bz\|_\div}=-\frac{(w,\Div\bz)}{\|\bz\|_\div} \lesssim \frac{\|w\|\|\bz\|_\div}{\|\bz\|_\div}\ .
\end{equation*}
Thus, we conclude that $\|\bv\|_\div\lesssim \|w\|$, and therefore we have
$$\|\bR_h\bv-\bv\|_\div\lesssim h\|\bv\|_{2}\lesssim h\|\bv\|_{1,\div} \lesssim  h\| w\|_1\ .$$
This is also due to the fact that $\Curl \bv=0$, which implies $\|\bv\|_2\lesssim \|\bv\|_{1,\div}$.
Observing now that $g_h(w)=-\bR_h\bv$ we have
$$\|g_h(w)\|_\div=\|\bR_h\bv\|_{\div}\lesssim \|\bR_h\bv-\bv\|_{\div}+\|\bv\|_\div\lesssim h\|w\|_1+\|w\|\ ,$$
which completes the proof.
\end{proof}


\subsection{Error estimates} Here we study the convergence of the numerical solution to the exact solution and we estimate the errors in standard norms. Specifically, we have the following theorem:
\begin{theorem}\label{thrm:t3.1}
Under the conditions of Theorem \ref{thrm:main2} and for any $T< T_\text{max}$ where $T_\text{max}$ is the maximal time of existence of the sufficiently smooth solution $(\eta,\bu)$ of the continuous problem, there exists $h_0$ such that for any $h\in(0,h_0)$ and $r\geq2$, $p\geq 3$ the semidiscrete problem (\ref{eq:semidisc2}), has a unique solution $(\eta^h,\bu^h)\in S_h^r\times \bS_h^{p}$  in the interval $[0,T]$. Moreover,  there exists a constant $C=C(\eta,\bu,T)$ independent of $h$ such that
\begin{equation}\label{eq:errest1}
\|\eta^h-\eta\|+\|\bu^h-\bu\|_\div \leq C(h^{r}+h^{p-1})\ ,
\end{equation}
and 
 \begin{equation}\label{eq:errest2}
\|\eta^h-\eta\|_1+\|\bu^h-\bu\|_\div \leq C(h^{r-1}+h^{p-1})\ ,
\end{equation}
for all $t\in [0,T]$.
\end{theorem}
\begin{proof}
First of all, assume that there is a positive constant $M$,  independent of time, such that $\max(\|\eta\|_{\infty},\|\bu\|_{1,\infty}) \leq M/2$ for all $t\in [0,T]$.
Then, the initial conditions $\eta^h_0$ and $\bu^h_0$ are appropriately bounded.
In particular, for $h$ small enough we have that
$$\|\eta^h_0\|_{\infty}\leq \|\eta^h_0-\eta_0\|_{\infty}+\|\eta_0\|_{\infty}=\|R_h\eta_0-\eta_0\|_{\infty}+\|\eta_0\|_{\infty}\leq C\gamma_r(h)+\|\eta_0\|_{\infty}\leq M\ ,$$
where $\gamma_r(h)=h^r|\log h|^{\bar{r}}$ with $\bar{r}=0$ if $r>2$ and $\bar{r}=1$ when $r=2$, \cite{Scott1976}.
Similarly, considering the elliptic projection $\tilde{\bR}_h\bu=(R_h u,R_h v)$ for any $\bu=(u,v)$ sufficiently smooth we have that 
$$
\begin{aligned}
\|\bu_0^h\|_\infty &\leq \|\bu_0^h-\bu_0\|_\infty+\|\bu_0\|_\infty\\
&\leq \|\bR_h\bu_0-\tilde{\bR}_h\bu_0\|_\infty +\|\tilde{\bR}_h\bu_0-\bu_0\|_\infty+M/2\\
&\leq Ch^{-1}\|\bR_h\bu_0-\tilde{\bR}_h\bu_0\|+C\gamma_p(h)+M/2\\
&\leq Ch^{-1}(\|\bR_h\bu_0-\bu_0\|+\|\bu_0-\tilde{\bR}_h\bu_0\|)+C\gamma_p(h)+M/2\\
&\leq Ch^{p-2}+C\gamma_p(h)+M/2\ ,
\end{aligned}
$$
and thus for sufficiently small $h$ we have $\|\bu^h_0\|_{\infty}\leq M$.


Moreover, it is easily seen that the semidiscrete system of ordinary differential equations (\ref{eq:semidode}) has a unique solution for at least a small time interval $[0,t_h]$. This is because $f_h$ and $g_h$ are Lipschitz functions for $\|\eta^h\|_\infty\leq M$ and $\|\bu^h\|_{\infty}\leq M$ for fixed $h>0$ due to Lemma \ref{lem:lem3.3}. 
  Thus, we assume that there is a maximal time $t_h\in[0,T]$ such that
$\|\eta^h\|_{\infty}\leq M$ and $\|\bu^h\|_{\infty}\leq M$ for all $t\leq t_h$. For the same time interval of the existence of the semidiscrete solution we can also assume that $\|\bu^h\|_{1,\infty}\leq Ch^{p-3}+M/2<\infty$. Thus, $\bu^h\in W^{1,\infty}(\Omega)\times W^{1,\infty}(\Omega)$ for sufficiently small $h>0$, and thus the trace inequality $\|\bu^h\|_{\infty,\partial\Omega}\lesssim \|\bu^h\|_\infty$ holds true.

We consider the quantities 
$$\begin{aligned}
&\theta=\eta^h-R_h\eta,\qquad  \rho=R_h\eta-\eta, \\
&\bzeta=\bu^h-\bR_h\bu,\qquad  \bxi=\bR_h\bu-\bu\ .
\end{aligned}
$$
From the approximation properties of the elliptic projection, see Lemma \ref{lem:approxb}, we have $\|\rho\|\lesssim h^r$ and $\|\bxi\|_\div\lesssim h^{p-1}$. Then, the errors are defined as 
$$e=\eta^h-\eta=\theta+\rho, \qquad \be=\bu^h-\bu=\bzeta+\bxi\ .$$
We observe that
$$\begin{aligned}
\mathcal{A}(\theta_t,\chi)&= \mathcal{A}(\eta^h_t,\chi)-\mathcal{A}(\eta_t,\chi)\\
&= ((D+\eta^h)\bu^h,\nabla\chi)-((D+\eta)\bu,\nabla\chi) \\
&= \mathcal{A}(f_h((D+\eta^h)\bu^h-(D+\eta)\bu),\chi)\ ,
\end{aligned}
$$
and since this is true for all $\chi\in S_h$ we have that
$$\theta_t=f_h((D+\eta^h)\bu^h-(D+\eta)\bu)\ .$$
Rearranging the terms in the last expression we have
$$\theta_t=f_h(D(\bzeta+\bxi))+f_h(\eta^h(\bzeta+\bxi))+f_h((\theta+\rho)\bu)\ .$$
Therefore, using Lemma \ref{lem:lem3.3} we have
$$
\|\theta_t\|_1 \lesssim (1+\|\eta^h\|_{\infty})(\|\bzeta\|+\|\bxi\|)+\|\bu\|_{\infty}(\|\theta\|+\|\rho\|)\ ,
$$
which implies
\begin{equation}\label{eq:theta1}
\|\theta_t\|_1\lesssim (h^{r}+h^{p-1})+\|\theta\|+\|\bzeta\|\ .
\end{equation}
Similarly, for any $\bchi\in\bS_h^p$ and by the definition of $\mathcal{B}$ we have
$$\begin{aligned}
\mathcal{B}(\bu_t,\bchi)&=(D\bu_t,D\bchi)+\frac{1}{6}(\Div (D^2\bu_t),\Div(D^2\bchi))-\frac{1}{6}\langle \Div (D^2\bu_t),D^2\bchi\cdot \bn\rangle\\ 
&\qquad -\frac{1}{6}\langle D^2\bu_t\cdot \bn,\Div (D^2\bchi)\rangle+\frac{\gamma}{h}\langle D^2\bu_t\cdot\bn,\bchi\cdot\bn\rangle\\
\text{(since $\bu_t\cdot \bn=0$)} &= (D\bu_t,D\bchi)+\frac{1}{6}(\Div (D^2\bu_t),\Div(D^2\bchi))-\frac{1}{6}\langle \Div (D^2\bu_t),D^2\bchi\cdot \bn\rangle\\
(\text{divergence thrm})&= (D^2\bu_t,\bchi)-\frac{1}{6}(\nabla\Div (D^2\bu_t),D^2\bchi)+\frac{1}{6}\langle \Div(D^2\bu_t),D^2\bchi\cdot\bn\rangle-\frac{1}{6}\langle \Div (D^2\bu_t),D^2\bchi\cdot \bn\rangle\\
&=(D^2(\bu_t-\frac{1}{6}\nabla\Div (D^2\bu_t)),\bchi)\\
&=-(\nabla(\eta+\tfrac{1}{2}|\bu|^2),D^2\bchi)\\
&=\mathcal{B}(g_h(\eta+\tfrac{1}{2}|\bu|^2),\bchi)\ .
\end{aligned}$$
Note also that
$$
\begin{aligned}
\mathcal{B}(\bzeta_t,\bchi)&=\mathcal{B}(\bu^h_t,\bchi)-\mathcal{B}(\bu_t,\bchi)\\
&=\mathcal{B}(g_h(\eta^h+\tfrac{1}{2}|\bu^h|^2-\eta-\tfrac{1}{2}|\bu|^2),\bchi)\ .
\end{aligned}
$$
Therefore, we can write $\bzeta_t$ as
$$\begin{aligned}
\bzeta_t&= g_h(\eta^h+\tfrac{1}{2}|\bu^h|^2-\eta-\tfrac{1}{2}|\bu|^2)\\
&=g_h(\theta+\rho)+g_h((\bzeta+\bxi)\cdot \bu^h)+g_h(\bu\cdot(\bzeta+\bxi))\ .\\
\end{aligned}$$
Using again Lemma \ref{lem:lem3.3} we have
$$
\begin{aligned}
\|\bzeta_t\|_\div &\lesssim \|\theta\|+\|\rho\|+h(\|\theta\|_1+\|\rho\|_1)+(\|\bu^h\|_{\infty}+\|\bu\|_{\infty})(\|\bzeta\|+\|\bxi\|)\\
&\quad +h^{1/2}\|\bzeta\cdot\bu^h\|_{\partial\Omega}
+h^{1/2}\|\bxi\cdot\bu^h\|_{\partial\Omega}
+h^{1/2}\|\bzeta\cdot\bu\|_{\partial\Omega}
+h^{1/2}\|\bxi\cdot\bu\|_{\partial\Omega}\\
&\lesssim (h^r+h^{p-1}) + \|\theta\|+ h^{1/2}[\|\bu^h\|_{\infty,\partial\Omega}(\|\bzeta\|_{\partial\Omega}+\|\bxi\|_{\partial\Omega})+\|\bu\|_{\infty,\partial\Omega}(\|\bzeta\|_{\partial\Omega}+\|\bxi\|_{\partial\Omega})]\\
&\lesssim (h^r+h^{p-1})+ \|\theta\|+ h^{1/2}[\|\bu^h\|_\infty(h^{-1/2}\|\bzeta\|+h^{1/2}h^{-1/2}\|\bxi\cdot\bn\|_{\partial\Omega})\\
&\quad +\|\bu\|_\infty(h^{-1/2}\|\bzeta\|+h^{1/2}h^{-1/2}\|\bxi\cdot\bn\|_{\partial\Omega})]\\
&\lesssim (h^r+h^{p-1})+\|\theta\|+ [\|\bu^h\|_\infty(\|\bzeta\|+h\norma{\bxi})+\|\bu\|_\infty(\|\bzeta\|+h\norma{\bxi})]\\
\end{aligned}
$$
which after applying Proposition \ref{prop:3.1} we obtain the estimate
\begin{equation}\label{eq:zeta1}
\|\bzeta_t\|_\div \lesssim (h^{r}+h^{p-1})+\|\theta\|+\|\bzeta\|\ .
\end{equation}
Finally, adding (\ref{eq:theta1}) and (\ref{eq:zeta1}) we obtain
$$\frac{d}{dt}\left(\|\theta\|^2_1+\|\bzeta\|_\div^2 \right)\lesssim (h^{r}+h^{p-1})^2 + \|\theta\|^2_1+\|\bzeta\|_\div^2\ ,$$
from which, using the Gronwall inequality we obtain the following superconvergence result for $0<t\leq t_h$:
\begin{equation}
\|\theta\|_1+\|\bzeta\|_\div\lesssim h^r+h^{p-1}\ .
\end{equation}
 The error estimate then follows from the fact
$$\|e\|+\|\be\|_\div\leq \|\theta\|+\|\rho\|+\|\bzeta\|_\div+\|\bxi\|_\div\lesssim h^r+h^{p-1} \ ,$$
and
$$\|e\|_1+\|\be\|_\div\leq \|\theta\|_1+\|\rho\|_1+\|\bzeta\|_\div+\|\bxi\|_\div\lesssim h^{r-1}+h^{p-1} \ .$$
Having the convergence until $t_h$, we can show that the solution is indeed bounded in the appropriate norms for sufficiently small $h$. More precisely, we have
$$
\begin{aligned}
\|\eta^h\|_{\infty}&\leq \|\eta^h-R_h\eta\|_{\infty}+\|R_h\eta-\eta\|_{\infty}+\|\eta\|_{\infty}\\
&\leq Ch^{-1}\|\eta^h-R_h\eta\|+\gamma_r(h)+M \leq Ch^{r-1}+M/2 < M\ .
\end{aligned}
$$
Similarly, for sufficiently small $h$ we show again that $\bu^h\in \bL^\infty$:
$$
\begin{aligned}
\|\bu^h\|_\infty &\leq \|\bu^h-\bu\|_\infty+\|\bu\|_\infty\\
&\leq \|\bu^h-\bR_h\bu\|_\infty +\|\bR_h\bu-\bu\|_\infty+M/2\\
&\leq \|\bu^h-\bR_h\bu\|_\infty +\|\bR_h\bu-\tilde{\bR}_h\bu\|_\infty+\|\tilde{\bR}_h\bu-\bu\|_\infty+M/2\\
&\leq Ch^{p-2}+C\gamma_p(h)+M/2\leq M\ .
\end{aligned}
$$
These estimates contradict the assumption of the existence of a maximal time $t_h$, and thus we conclude using the bootstrap theorem (cf. \cite{tao2006}) that $t_h=T$. 
\end{proof} 

\begin{remark}
From the proof of Theorem \ref{thrm:t3.1} we observe that the convergence of the semi-discrete solution in the $L^\infty$-norm is also established in the case $r\geq 2$ and $p\geq 3$. When $p=2$, the time $t_h$ cannot be extended up to $T$, although the error estimates (\ref{eq:errest1})--(\ref{eq:errest2}) are still valid. It is worth mentioning that we did not experience any problems when we tested 
the case $p=2$ numerically, and the results were always stable for the timescales we used.
\end{remark}

\begin{remark}
Using (\ref{eq:helpineq}) and (\ref{eq:neqinvers}) we observe that
$$
\begin{aligned}
\|\bu^h\cdot \bn\|_{\partial\Omega}&=\|(\bu^h-\bu)\cdot \bn\|_{\partial\Omega} 
\leq \|\bzeta\cdot\bn\|_{\partial\Omega} +\|\bxi\cdot\bn\|_{\partial\Omega}\\
&\leq h^{1/2}(\norma{\bzeta} +\norma{\bxi}) 
\lesssim h^{1/2}(h^{-1/2}\|\bzeta\|_\div+h^{p-1}) \\
&\lesssim h^r+h^{p-1}\ ,
\end{aligned}$$
thus, the normal trace of the numerical solution $\bu^h\cdot\bn$ converges to zero as $h\to0$. Experimentally, we found out that this estimate is not sharp enough and that $\|\bu^h\cdot \bn\|_{\partial\Omega}$ converges to zero even faster following an undetermined superconvergence law.
\end{remark}

\begin{remark}
The error estimate (\ref{eq:errest1}) appears to be sharp as we confirm experimentally in the next section. In particular, we verify that the error estimate in the case $(r,p)=(2,3)$ is $\|e\|+\|\be\|=O(h^r+h^{p-1})=O(h^2)$.
\end{remark}

\section{Numerical experiments}\label{sec:numerics}
In what follows we perform a series of numerical experiments with the aim of validating 
the new model for the generation and propagation of shallow water waves. 
First we present an experimental validation of the convergence rates analyzed 
in Section \ref{sec:numsols} for the semidiscrete problem (\ref{eq:semidisc}). 
For this purpose we implement the time-discretization with the classical, explicit
four-stage, fourth-order Runge-Kutta scheme and which has been analyzed and used extensively 
in similar problems where the regularization terms result 
into a non-stiff system of ordinary differential equations \cite{DMS2010, DMS2007, KMS2019}.
\subsection{Numerical confirmation of convergence rates in a two-dimensional domain with non-trivial bathymetry}
Our first task is the numerical verification of the error estimates (\ref{eq:errest1}) 
and (\ref{eq:errest2}). 
For this purpose we consider the initial-boundary value problem (\ref{eq:bbm2a})--(\ref{eq:bcs})
in the domain $\Omega=[0,1]\times [0,1]$, equipped with an appropriate forcing term so that 
the resulting system admits the following the functions as an exact solution:
\begin{equation}
\begin{aligned}
\eta(\bx,t)&=e^{t}\cos(\pi x)\cos(\pi y)\ ,\\
\bu(\bx,t)&= e^{t}\left(\cos(\pi y)\sin(\pi x),\cos(\pi x)\sin(\pi y)\right)\ .
\end{aligned}
\end{equation}
This specific exact solution satisfies the boundary conditions $\nabla \eta\cdot\bn=0$ and $\bu\cdot \bn=0$, and also the condition $\Curl \bu(\bx,t)=0$ for all $t\geq 0$, and therefore complies with the theory developed in the previous sections.
The bottom topography is chosen to be 
$$D(\bx)=1-10^{-2}e^{-|\bx|^2}\ .$$
We further consider regular, uniform triangulations $\mathcal{T}_h$ 
of $\Omega$ for $h=h_i=1/N$, $N=8+4i$, for $i=0,1,\cdots, 6$. 
For each grid $\mathcal{T}_h$ we integrate the system (\ref{eq:semidisc}) 
up to time $T=1$ using the classical, explicit four-stage, fourth-order Runge-Kutta method 
with stepsize $\Delta t=5\times 10^{-4}$ to ensure that errors induced by the time integration 
are negligible compared to the respective errors of the spatial discretization.  
The error of the Runge-Kutta method is expected to be of the order of $(\Delta t)^4$, 
while as we saw in the previous section the errors from the semidiscretization appear 
to have smaller convergence rates, especially the cases we consider here, 
which are linear, quadratic and cubic Lagrange elements. 
During the time integration we recorded the numerical errors 
$E^0(\eta)=\|e\|$, $E^0(\bu)=\|\be\|$, $E^1(\eta)=\|e\|_1$ and $E^{\div}(\bu)=\|\be\|_\div$, 
and we compute the experimental convergence rates $R$ defined as
$$R^\alpha_i=\log\left(\frac{E^\alpha_i}{E^\alpha_{i+1}}\right)/\log\left(\frac{h_i}{h_{i+1}}\right),~ i=0,1,\cdots, 6\ ,$$
where $\alpha$ is $0$, $1$ or $\div$. 
It is noted that for the penalty parameter of Nitsche's method we used $\gamma=1000$. This value was the largest value greater than $10$ we tried and worked well. We didn't observe any instabilities for the values we tried, while in some cases (depending on the choice of the bottom topography) the inversion the regularization operator was more accurate for smaller values of $\gamma$. Moreover, for implementation purposes, instead of using the bottom topography $D(\bx)$ 
we use the $L^2$-projection of the depth function into the space $S_h^r$. 

First we start with the case $p=r+1$ where convergence is guaranteed by Theorem \ref{thrm:t3.1}. 
In Tables \ref{tab:tab1a}, \ref{tab:tab1b} we present the errors and the convergence rates in the case 
where $(r,p)=(2,3)$. The specific experiment confirms the optimal error estimate (\ref{eq:errest1}) 
for the $L^2$-norm of $\eta$ and $H^\div$-norm of $\bu$. 
The error between $\bu^h$ and $\bu$ in the $L^2$-norm apparently converges to $0$ 
with the same rate as in the $H^\div$-norm which again is a confirmation of Theorem \ref{thrm:t3.1}.  
The convergence rates for both $\bu^h$ and $\eta^h$ in Theorem \ref{thrm:t3.1} are optimal, 
but they do not guarantee optimal convergence rates in other norms 
except for the optimal convergence rate for the $H^1$-norm of the error in $\eta$. 
An interesting observation derived from the specific numerical experiment 
is that the errors in the $L^2$-norm for both $\eta$ and $\bu$ are of the same order. 
On the other hand, the respective errors based on the $H^1$-norm appear to have different orders. 
The error $\|\bu-\bu^h\|_1=O(10^{-3})$ while $\|\eta-\eta^h\|_1=O(10^{-1})$, 
perhaps due to the use of quadratic polynomials for $\bu$ and linear polynomials in $\eta$.

Very similar results can be observed in the case where $(r,p)=(3,4)$ in Table \ref{tab:tab2a} and \ref{tab:tab2b} with the exception that the convergence rates based on the $L^2$-norm are all optimal this time. This phenomenon is due to the specific choice of the bottom topography. For different bottom topography $D(x,y)=-1/20(x+y)+3/2$ we observe suboptimal $L^2$-norm based rates for $\eta$ again. For the specific linear bottom the $H^1$-norm based convergence rates for $\bu$ appears also to be suboptimal. Therefore, the only error estimate that can be confirmed is the one proved in Theorem \ref{thrm:t3.1}. 
\begin{table}[htbp]
  \centering
  \caption{Convergence rates for the unknown $\bu$ for the spatial discretization in terms of the 
           maximum side length of the triangulation by $h$ for the case $(r,p)=(2,3)$.}
    \begin{tabular}{c|cc|cc|cc}\hline
   $h$ & $\|\bu-\bu_h\|$ &  $R^0_i$ &  $\|\bu-\bu_h\|_\div$ &  $R^\div_i$ & $\|\bu-\bu_h\|_1$ & $R^1_i$ \\ \hline  
  $1.250\times 10^{-1} $ &  $2.704\times 10^{-3}$ &               &  $2.146\times 10^{-2}$ &                &  $2.397\times 10^{-2}$ &   \\   
  $8.333\times 10^{-2} $ &  $1.200\times 10^{-3}$ &  $2.003$ & $9.419\times 10^{-3}$ &  $2.031$ &  $1.054\times 10^{-2}$ &  $2.027$ \\ 
  $6.250\times 10^{-2} $ &  $6.748\times 10^{-4}$ &  $2.001$ & $5.269\times 10^{-3}$ &  $2.019$ &  $5.920\times 10^{-3}$ &  $2.004$  \\ 
  $5.000\times 10^{-2} $ &  $4.318\times 10^{-4}$ &  $2.001$ & $3.363\times 10^{-3}$ &  $2.012$ &  $3.803\times 10^{-3}$ &  $1.984$ \\ 
  $4.167\times 10^{-2} $ &  $2.998\times 10^{-4}$ &  $2.001$ & $2.332\times 10^{-3}$ &  $2.007$ &  $2.659\times 10^{-3}$ &  $1.962$ \\ 
  $3.571\times 10^{-2} $ &  $2.203\times 10^{-4}$ &  $2.000$ & $1.713\times 10^{-3}$ &  $2.002$ &  $1.972\times 10^{-3}$ &  $1.940$ \\ 
  $3.125\times 10^{-2} $ &  $1.686\times 10^{-4}$ &  $2.000$ & $1.312\times 10^{-3}$ &  $1.998$ &  $1.527\times 10^{-3}$ &  $1.915$ \\ \hline  
\end{tabular}
  \label{tab:tab1a}%
\end{table}%
\begin{table}[htbp]
  \centering
  \caption{Convergence rates for the unknown $\eta$ for the spatial discretization in terms of the 
           maximum side length of the triangulation by $h$ for the case $(r,p)=(2,3)$.}
    \begin{tabular}{c|cc|cc}\hline
   $h$ & $\|\eta-\eta_h\|$ &  $R^0_i$ &  $\|\eta-\eta_h\|_1$ &  $R^1_i$ \\ \hline  
  $1.250\times 10^{-1} $ & $ 1.021\times 10^{-2} $ &                &  $6.276\times 10^{-1} $ &      \\  
  $8.333\times 10^{-2} $ & $ 4.510\times 10^{-3} $ &  $2.014$ &  $4.179\times 10^{-1} $ &  $1.003$  \\  
  $6.250\times 10^{-2} $ & $ 2.532\times 10^{-3} $ &  $2.007$ &  $3.133\times 10^{-1} $ &  $1.001$  \\  
  $5.000\times 10^{-2} $ & $ 1.619\times 10^{-3} $ &  $2.004$ &  $2.506\times 10^{-1} $ &  $1.001$  \\  
  $4.167\times 10^{-2} $ & $ 1.124\times 10^{-3} $ &  $2.003$ &  $2.088\times 10^{-1} $ &  $1.001$  \\  
  $3.571\times 10^{-2} $ & $ 8.253\times 10^{-4} $ &  $2.002$ &  $1.790\times 10^{-1} $ &  $1.000$ \\ 
  $3.125\times 10^{-2} $ & $ 6.317\times 10^{-4} $ &  $2.001$ &  $1.566\times 10^{-1} $ &  $1.000$  \\ \hline  
\end{tabular}
  \label{tab:tab1b}%
\end{table}%
\begin{table}[htbp]
  \centering
  \caption{Convergence rates for the unknown $\bu$ for the spatial discretization in terms of the 
           maximum side length of the triangulation by $h$ for the case $(r,p)=(3,4)$.}
    \begin{tabular}{c|cc|cc|cc}\hline
   $h$ & $\|\bu-\bu_h\|$ &  $R^0_i$ &  $\|\bu-\bu_h\|_\div$ &  $R^\div_i$ & $\|\bu-\bu_h\|_1$ & $R^1_i$ \\ \hline  
  $1.250\times 10^{-1} $ &  $2.248\times 10^{-5}$ &               &  $1.456\times 10^{-3}$ &                &  $1.491\times 10^{-3}$ &   \\   
  $8.333\times 10^{-2} $ &  $4.510\times 10^{-6}$ &  $3.962$ & $4.378\times 10^{-4}$ &  $2.964$ &  $4.489\times 10^{-4}$ &  $2.961$ \\ 
  $6.250\times 10^{-2} $ &  $1.437\times 10^{-6}$ &  $3.977$ & $1.857\times 10^{-4}$ &  $2.982$ & $ 1.907\times 10^{-4}$ &  $2.975$  \\ 
  $5.000\times 10^{-2} $ &  $5.911\times 10^{-7}$ &  $3.980$ & $9.530\times 10^{-5}$ &  $2.989$ &  $9.815\times 10^{-5}$ &  $2.978$ \\ 
  $4.167\times 10^{-2} $ &  $2.861\times 10^{-7}$ &  $3.980$ & $5.523\times 10^{-5}$ &  $2.992$ &  $5.705\times 10^{-5}$ &  $2.976$ \\ 
  $3.571\times 10^{-2} $ &  $1.550\times 10^{-7}$ &  $3.977$ & $3.482\times 10^{-5}$ &  $2.993$ &  $3.609\times 10^{-5}$ &  $2.971$ \\ 
  $3.125\times 10^{-2} $ &  $9.119\times 10^{-8}$ &  $3.972$ & $2.334\times 10^{-5}$ &  $2.994$ &  $2.429\times 10^{-5}$ &  $2.965$ \\ \hline  
\end{tabular}
  \label{tab:tab2a}%
\end{table}%
\begin{table}[htbp]
  \centering
  \caption{Convergence rates for the unknown $\eta$ for the spatial discretization in terms of the 
           maximum side length of the triangulation by $h$ for the case $(r,p)=(3,4)$.}
    \begin{tabular}{c|cc|cc}\hline
   $h$ & $\|\eta-\eta_h\|$ &  $R^0_i$ &  $\|\eta-\eta_h\|_1$ &  $R^1_i$ \\ \hline  
  $1.250\times 10^{-1} $ & $ 4.400\times 10^{-4} $ &                &  $3.203\times 10^{-2} $ &      \\  
  $8.333\times 10^{-2} $ & $ 1.318\times 10^{-4} $ &  $2.973$ &  $1.424\times 10^{-2} $ &  $2.000$  \\  
  $6.250\times 10^{-2} $ & $ 5.583\times 10^{-5} $ &  $2.986$ &  $8.008\times 10^{-3} $ &  $2.000$  \\  
  $5.000\times 10^{-2} $ & $ 2.864\times 10^{-5} $ &  $2.992$ &  $5.125\times 10^{-3} $ &  $2.000$  \\  
  $4.167\times 10^{-2} $ & $ 1.659\times 10^{-5} $ &  $2.994$ &  $3.559\times 10^{-3} $ &  $2.000$  \\  
  $3.571\times 10^{-2} $ & $ 1.045\times 10^{-5} $ &  $2.996$ &  $2.615\times 10^{-3} $ &  $2.000$ \\ 
  $3.125\times 10^{-2} $ & $ 7.006\times 10^{-6} $ &  $2.997$ &  $2.002\times 10^{-3} $ &  $2.000$  \\ \hline  
\end{tabular}
  \label{tab:tab2b}%
\end{table}%

We close this section by presenting the experimental convergence rates when $r=p$. Tables \ref{tab:tab3a} and \ref{tab:tab3b} presents the errors and the convergence rates for $r=p=2$. In this case we obtained optimal convergence rates in all norms. In Tables \ref{tab:tab4a} and \ref{tab:tab4b} we present the respective errors and convergence rates for the case $r=p=3$. In this case again it is quite obvious that there is no optimal convergence in $\bL^2$ and $\bH^1$ norms for the solution $\bu$ as the rate is decreasing steadily. On the other hand the convergence rate in $H^\div$-norm is optimal again for $\bu$ and also the $L^2$ and $H^1$ convergence rates for $\eta$ are also optimal. 

\begin{table}[htbp]
  \centering
  \caption{Convergence rates for the unknown $\bu$ for the spatial discretization in terms of the 
           maximum side length of the triangulation by $h$ for the case $(r,p)=(2,2)$.}
    \begin{tabular}{c|cc|cc|cc}\hline
   $h$ & $\|\bu-\bu_h\|$ &  $R^0_i$ &  $\|\bu-\bu_h\|_\div$ &  $R^\div_i$ & $\|\bu-\bu_h\|_1$ & $R^1_i$ \\ \hline  
  $1.250\times 10^{-1} $ &  $1.792\times 10^{-2}$ &               &  $4.166\times 10^{-1}$ &                &  $5.933\times 10^{-1}$ &   \\   
  $8.333\times 10^{-2} $ &  $7.972\times 10^{-3}$ &  $1.998$ & $2.771\times 10^{-1}$ &  $1.005$ &  $3.931\times 10^{-1}$ &  $1.015$ \\ 
  $6.250\times 10^{-2} $ &  $4.486\times 10^{-3}$ &  $1.999$ & $2.077\times 10^{-1}$ &  $1.003$ & $ 2.942\times 10^{-1}$ &  $1.007$  \\ 
  $5.000\times 10^{-2} $ &  $2.871\times 10^{-3}$ &  $1.999$ & $1.661\times 10^{-1}$ &  $1.002$ &  $2.351\times 10^{-1}$ &  $1.004$ \\ 
  $4.167\times 10^{-2} $ &  $1.994\times 10^{-3}$ &  $2.000$ & $1.384\times 10^{-1}$ &  $1.001$ &  $1.958\times 10^{-1}$ &  $1.003$ \\ 
  $3.571\times 10^{-2} $ &  $1.465\times 10^{-3}$ &  $2.000$ & $1.186\times 10^{-1}$ &  $1.001$ &  $1.678\times 10^{-1}$ &  $1.002$ \\ 
  $3.125\times 10^{-2} $ &  $1.122\times 10^{-3}$ &  $2.000$ & $1.038\times 10^{-1}$ &  $1.001$ &  $1.468\times 10^{-1}$ &  $1.001$ \\ \hline  
\end{tabular}
  \label{tab:tab3a}%
\end{table}%
\begin{table}[htbp]
  \centering
  \caption{Convergence rates for the unknown $\eta$ for the spatial discretization in terms of the 
           maximum side length of the triangulation by $h$ for the case $(r,p)=(2,2)$.}
    \begin{tabular}{c|cc|cc}\hline
   $h$ & $\|\eta-\eta_h\|$ &  $R^0_i$ &  $\|\eta-\eta_h\|_1$ &  $R^1_i$ \\ \hline  
  $1.068\times 10^{-2} $ & $ 9.135\times 10^{-3} $ &                &  $6.289\times 10^{-1} $ &      \\  
  $4.728\times 10^{-3} $ & $ 4.039\times 10^{-3} $ &  $2.013$ &  $4.183\times 10^{-1} $ &  $1.006$  \\  
  $2.656\times 10^{-3} $ & $ 2.268\times 10^{-3} $ &  $2.006$ &  $3.134\times 10^{-1} $ &  $1.003$  \\  
  $1.699\times 10^{-3} $ & $ 1.450\times 10^{-3} $ &  $2.004$ &  $2.507\times 10^{-1} $ &  $1.002$  \\  
  $1.179\times 10^{-3} $ & $ 1.007\times 10^{-3} $ &  $2.003$ &  $2.088\times 10^{-1} $ &  $1.001$  \\  
  $8.662\times 10^{-4} $ & $ 7.394\times 10^{-4} $ &  $2.002$ &  $1.790\times 10^{-1} $ &  $1.001$ \\ 
  $6.631\times 10^{-4} $ & $ 5.660\times 10^{-4} $ &  $2.001$ &  $1.566\times 10^{-1} $ &  $1.001$  \\ \hline  
\end{tabular}
  \label{tab:tab3b}%
\end{table}%
\begin{table}[htbp]
  \centering
  \caption{Convergence rates for the unknown $\bu$ for the spatial discretization in terms of the 
           maximum side length of the triangulation by $h$ for the case $(r,p)=(3,3)$.}
    \begin{tabular}{c|cc|cc|cc}\hline
   $h$ & $\|\bu-\bu_h\|$ &  $R^0_i$ &  $\|\bu-\bu_h\|_\div$ &  $R^\div_i$ & $\|\bu-\bu_h\|_1$ & $R^1_i$ \\ \hline  
  $1.250\times 10^{-1} $ &  $6.342\times 10^{-4}$ &               &  $2.409\times 10^{-2}$ &                &  $4.225\times 10^{-2}$ &   \\   
  $8.333\times 10^{-2} $ &  $1.914\times 10^{-4}$ &  $2.955$ & $1.048\times 10^{-2}$ &  $2.052$ &  $1.853\times 10^{-2}$ &  $2.033$ \\ 
  $6.250\times 10^{-2} $ &  $8.140\times 10^{-5}$ &  $2.971$ & $5.836\times 10^{-3}$ &  $2.036$ &  $1.038\times 10^{-2}$ &  $2.012$  \\ 
  $5.000\times 10^{-2} $ &  $4.192\times 10^{-5}$ &  $2.974$ & $3.713\times 10^{-3}$ &  $2.027$ &  $6.655\times 10^{-3}$ &  $1.994$ \\ 
  $4.167\times 10^{-2} $ &  $2.438\times 10^{-5}$ &  $2.972$ & $2.569\times 10^{-3}$ &  $2.021$ &  $4.641\times 10^{-3}$ &  $1.977$ \\ 
  $3.571\times 10^{-2} $ &  $1.544\times 10^{-5}$ &  $2.966$ & $1.882\times 10^{-3}$ &  $2.017$ &  $3.432\times 10^{-3}$ &  $1.959$ \\ 
  $3.125\times 10^{-2} $ &  $1.040\times 10^{-5}$ &  $2.958$ & $1.438\times 10^{-3}$ &  $2.014$ &  $2.648\times 10^{-3}$ &  $1.940$ \\ \hline  
\end{tabular}
  \label{tab:tab4a}%
\end{table}%
\begin{table}[htbp]
  \centering
  \caption{Convergence rates for the unknown $\eta$ for the spatial discretization in terms of the 
           maximum side length of the triangulation by $h$ for the case $(r,p)=(3,3)$.}
    \begin{tabular}{c|cc|cc}\hline
   $h$ & $\|\eta-\eta_h\|$ &  $R^0_i$ &  $\|\eta-\eta_h\|_1$ &  $R^1_i$ \\ \hline  
  $1.068\times 10^{-2} $ & $ 4.427\times 10^{-4} $ &                &  $3.197\times 10^{-2} $ &      \\  
  $4.728\times 10^{-3} $ & $ 1.322\times 10^{-4} $ &  $2.981$ &  $1.422\times 10^{-2} $ &  $1.997$  \\  
  $2.656\times 10^{-3} $ & $ 5.593\times 10^{-5} $ &  $2.991$ &  $8.004\times 10^{-3} $ &  $1.999$  \\  
  $1.699\times 10^{-3} $ & $ 2.867\times 10^{-5} $ &  $2.994$ &  $5.123\times 10^{-3} $ &  $1.999$  \\  
  $1.179\times 10^{-3} $ & $ 1.660\times 10^{-5} $ &  $2.996$ &  $3.558\times 10^{-3} $ &  $2.000$  \\  
  $8.662\times 10^{-4} $ & $ 1.046\times 10^{-5} $ &  $2.997$ &  $2.614\times 10^{-3} $ &  $2.000$ \\ 
  $6.631\times 10^{-4} $ & $ 7.009\times 10^{-6} $ &  $2.998$ &  $2.002\times 10^{-3} $ &  $2.000$  \\ \hline  
\end{tabular}
  \label{tab:tab4b}%
\end{table}%

Repeating the same experiments but using different bottom topographies we obtained similar results. 
In all cases investigated, we always obtained the optimal convergence rates guaranteed by Theorem \ref{thrm:t3.1}. 
For similar studies related to Boussinesq-Peregrine type system with similar regularization operators 
and the application of Nitsche's method we refer to \cite{KMS2019}. 
It is also noted that testing other initial conditions that didn't satisfy the condition $\Curl \bu=0$ 
we obtained very similar results to those presented here. 

The smooth bottom variations assumption in practice is not a major limitation on the range of validity of the model. 
The main reason is that the model is derived under the long wave assumption and it is known that bottom variations 
are not crucial for long waves of small amplitude. The shape and regularity of the boundary of 
$\Omega$ seems to be the only limitation as the use of non-convex or non-simply connected domains 
cannot be supported by the theory. 
On the other hand, in experiments with non-convex domains 
no significant or unexpected anomalies were observed (see \cite{KMS2019}).

\subsection{Experimental validation in a two-dimensional domain with uneven bottom}

In this section we present two numerical experiments in order to study the shoaling of traveling waves, 
which apparently shows the influence of the bottom topography to the solution of the system at hand. 
In both cases, experimental data are available and compared to the numerical solution.  
We also compare the regularized shallow water equations (\ref{eq:bbm}) with the simplified BBM-BBM system (\ref{eq:prgndlowb}) 
written in dimensional form. Recall that the simplified BBM-BBM system contains only 
terms of maximum order $\varepsilon$ and $\sigma^2$ 
while the BBM-BBM term contains additional terms of order $\varepsilon\sigma^2$. 
The specific experiments are standard benchmarks cases, 
and have been used numerous times 
for the validation of various Boussinesq systems and numerical models \cite{WB1999,KMS2019}.  
In both experiments a rectangular basin of dimensions $[-50,20]\times [0,1]$ 
is considered for the propagation of solitary waves over a bottom which is flat in $[-50,0]$ 
and the eventual shoaling of the solitary waves on a bottom slope of $1/50$ in $[0,20]$. 
In the first experiment, the solitary wave has amplitude $0.07~m$ while in the second the amplitude is $0.12~m$. 
The free surface is recorded at three different locations considered as wave gauges: 
$(x,y) = (0.0,0.5)$, $(x,y)=(16.25,0.5)$ and $(x,y)=(17.75,0.5)$. Figure \ref{fig:f0} shows a cross 
section along $y=0$ of the physical domain and the location of the three wave-gauges drawn in red. 
In this figure the solitary wave is the one used in the second case and is presented at its initial location. 
For the numerical experiments we consider a triangulation $\mathcal{T}_h$ 
consisted with $14,402$ triangles and stepsize $\Delta t=10^{-3}$, and the Galerkin method with $(r,p)=(2,3)$. 

\begin{figure}[H]
  \centering
  \includegraphics[width=0.7\columnwidth]{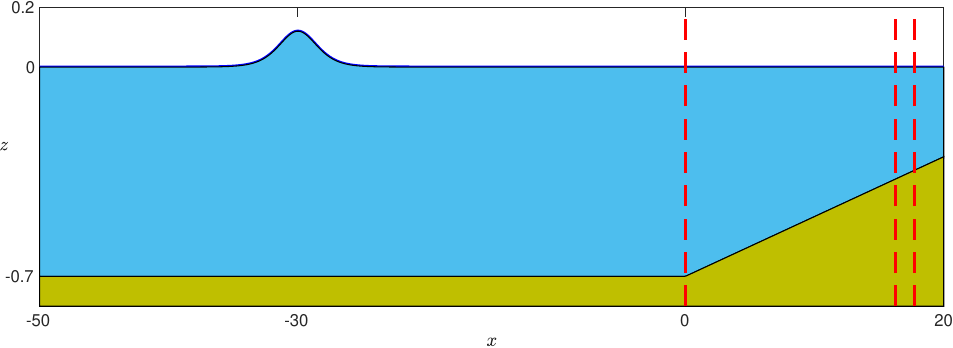}
  \caption{Cross section of the physical domain and locations of the wave-gauges}
  \label{fig:f0}
\end{figure}

Figures \ref{fig:f1} and \ref{fig:f2} present the recorded solution at the three wave gauges. 
As far as the new regularized shallow water system is concerned 
in both cases, the numerical solution is in agreement with the experimental data,
and this finding allows us to conclude that the assumption of smooth bottom variations 
is not a problem in practice for bottom topographies with slopes. 
On the other hand, the simplified BBM-BBM system (\ref{eq:prgndlowb}) fails to predict well 
the shoaling of the solitary waves. 
It is noted that we used the same initial conditions and the solitary waves 
have been the same in all cases. 

\begin{figure}[H]
  \centering
  \includegraphics[width=0.7\columnwidth]{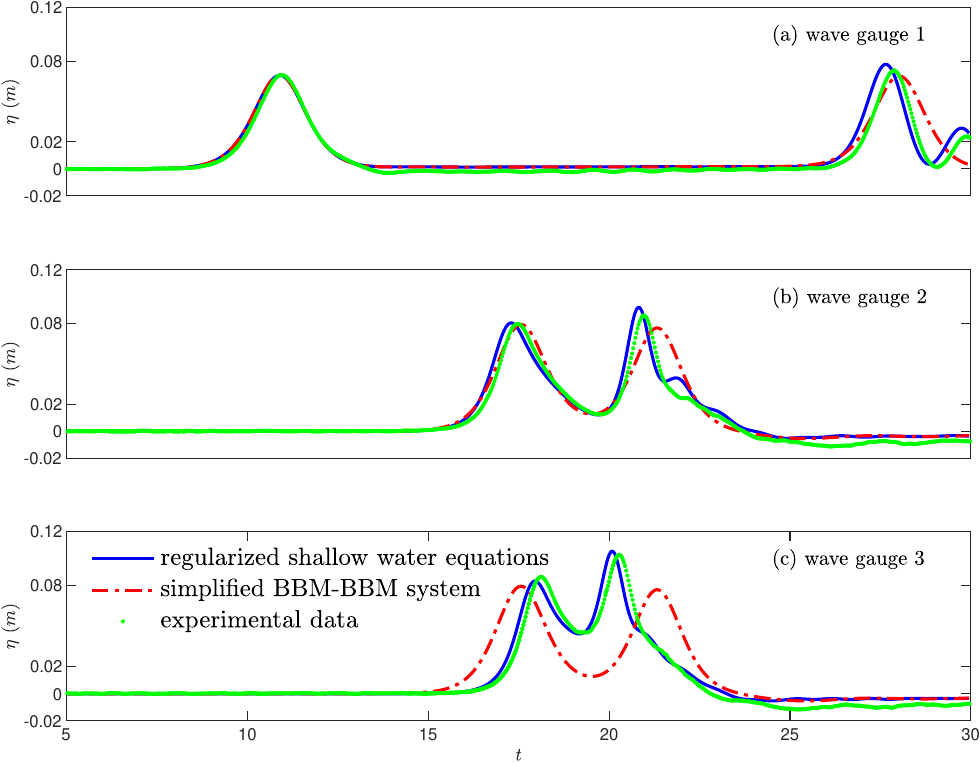}
  \caption{Surface elevation recorded at the three wave-gauges (A=0.07)}
  \label{fig:f1}
\end{figure}
\begin{figure}[H]
  \centering
  \includegraphics[width=0.7\columnwidth]{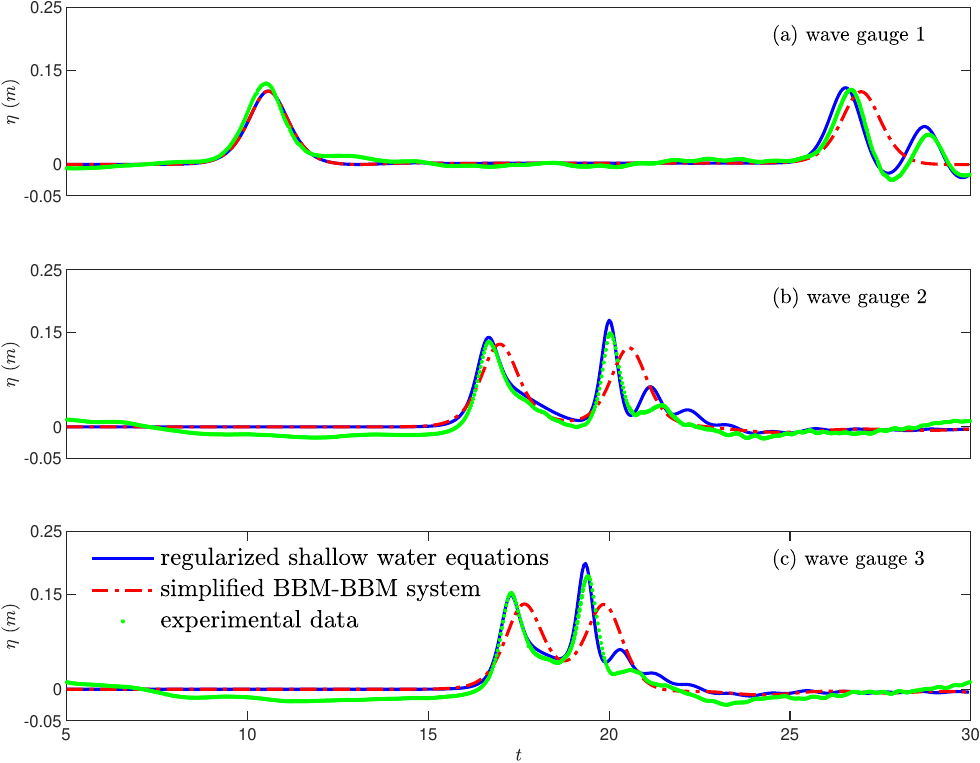}
  \caption{Surface elevation recorded at the three wave-gauges (A=0.12)}
  \label{fig:f2}
\end{figure}
It is worth mentioning that in these two experiments we recorded the integrals
$$M^h(t)=\int_{\Omega} \eta^h ~d \bx, \quad \mbox{ and } \quad E^h(t)=\frac{1}{2} \int_{\Omega} g [\eta^h]^2 + (D+\eta^h)|\bu^h|^2~d\bx \ .$$
In both cases the excess mass $M^h$ was conserved nearly to machine precision. The function $E^h$ was conserved to 5 digits.


\subsection{Interaction of a solitary wave with a cylindrical obstable}

In this section we consider a genuine 2D experiment describing the interaction of a solitary wave with a vertical ellipsoidal cylinder. In particular we consider the propagation of a classical, line solitary wave of amplitude $0.04$ propagating in a channel with horizontal dimensions $[-15,15]\times[-5,5]$ and depth $0.2$ (all the dIstances are in meters). A vertical ellipsoidal cylinder with major axis $2$, minor axis $1$ is located at the center of the channel with its center at the origin. The sketch of the domain is depicted in Figure \ref{fig:figure8}. 
\begin{figure}[H]
  \centering
  \includegraphics[width=0.6\columnwidth]{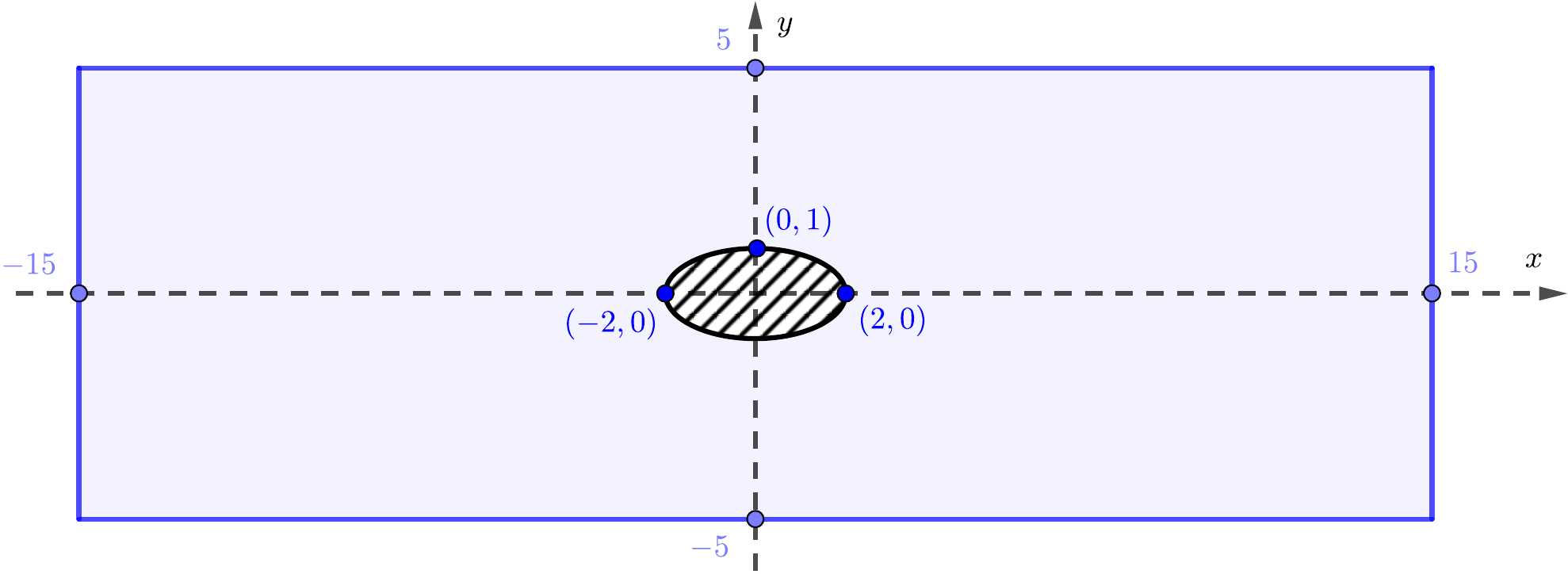}
     \caption{Sketch of the horizontal channel with a vertical ellipsoidal cylinder}
  \label{fig:figure8}
\end{figure}

\begin{figure}[ht!]
  \centering
  \begin{tabular}{cc}
  Classical BBM-BBM system & Regularized shallow water equations \\
  \includegraphics[width=0.48\columnwidth]{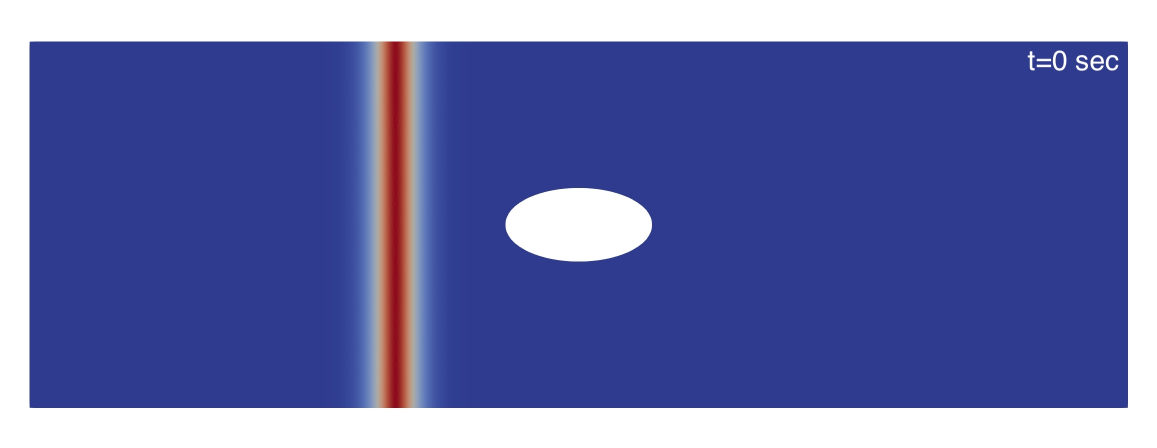} & \includegraphics[width=0.48\columnwidth]{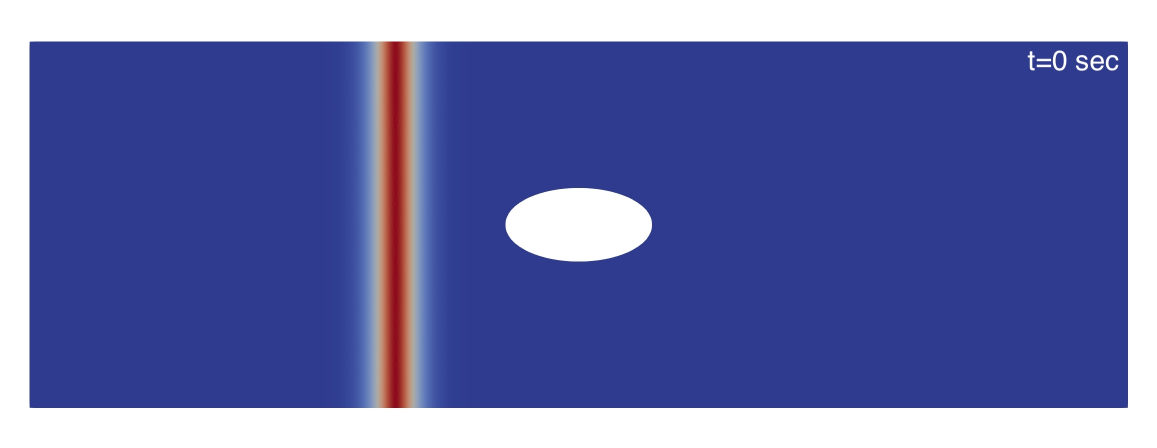} \\
  \includegraphics[width=0.48\columnwidth]{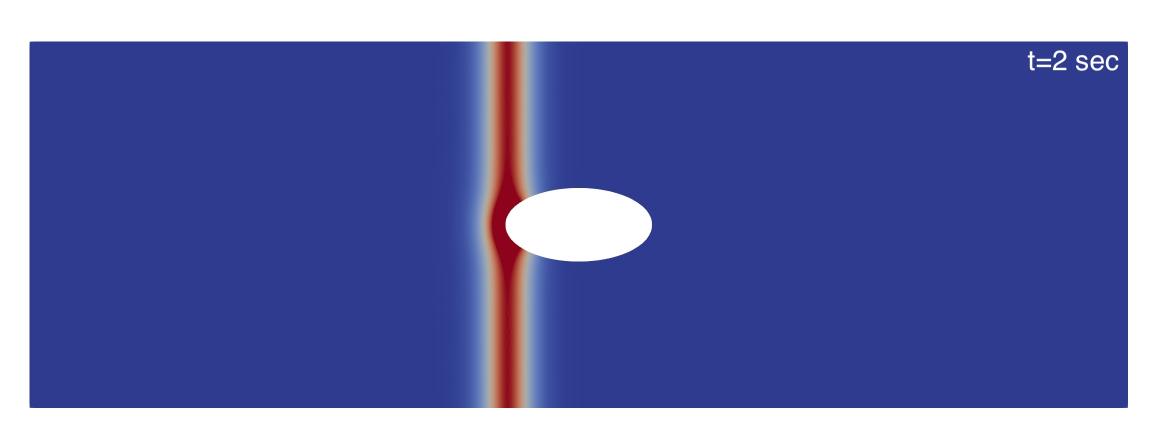} & \includegraphics[width=0.48\columnwidth]{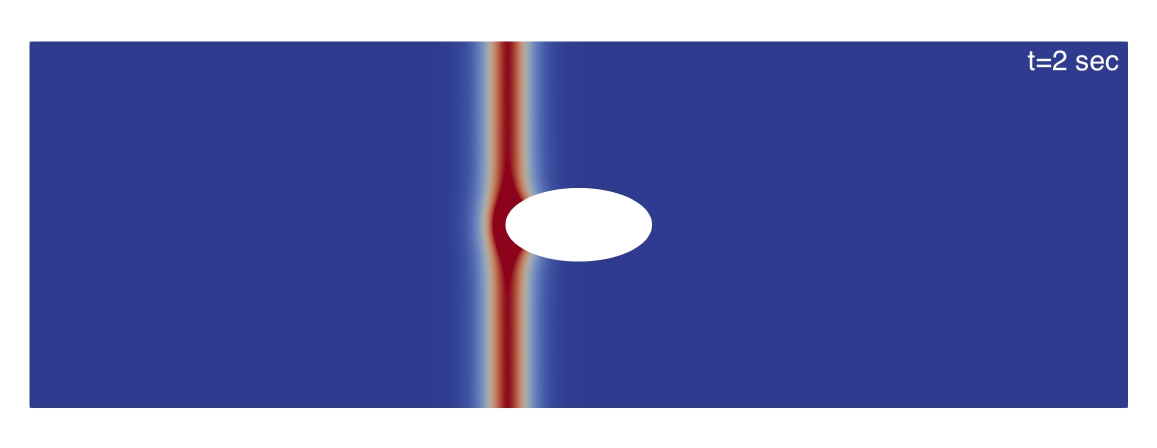} \\
  \includegraphics[width=0.48\columnwidth]{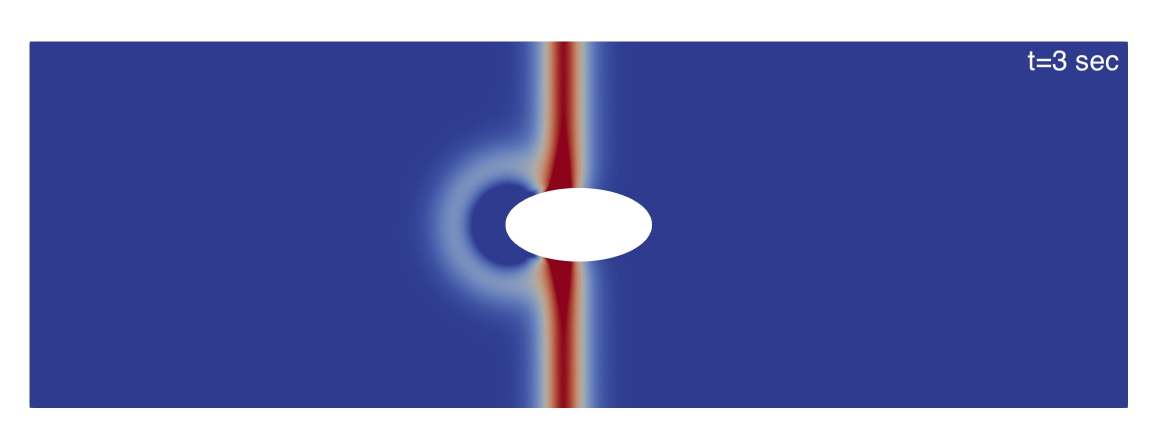} & \includegraphics[width=0.48\columnwidth]{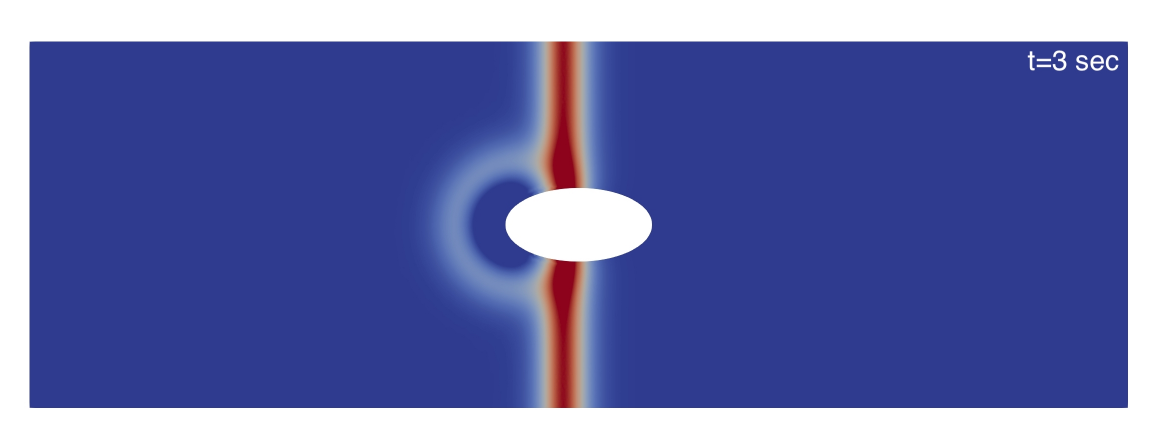} \\
  \includegraphics[width=0.48\columnwidth]{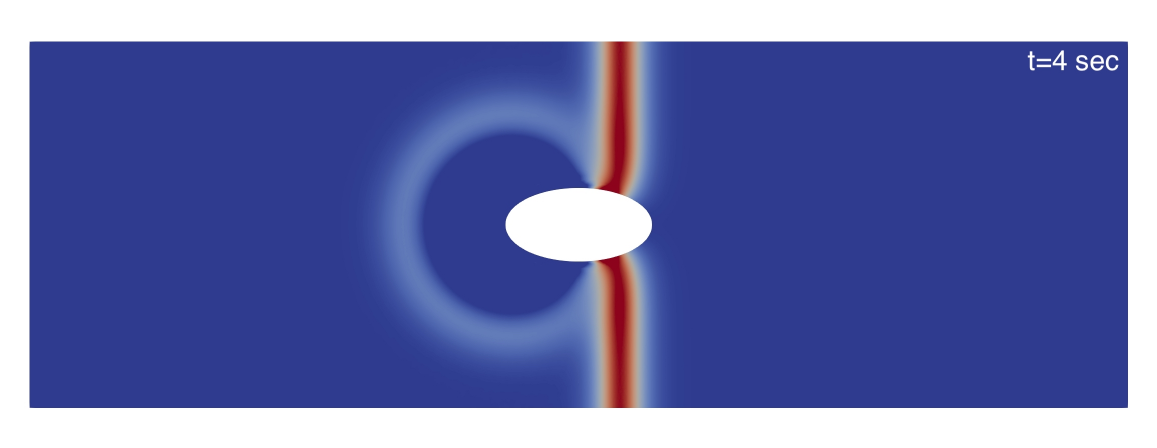} & \includegraphics[width=0.48\columnwidth]{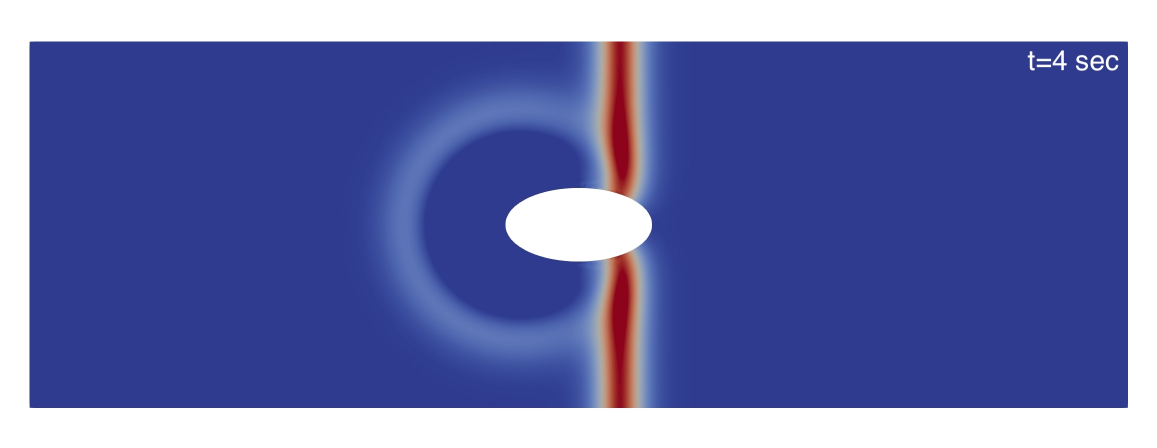} \\
  \includegraphics[width=0.48\columnwidth]{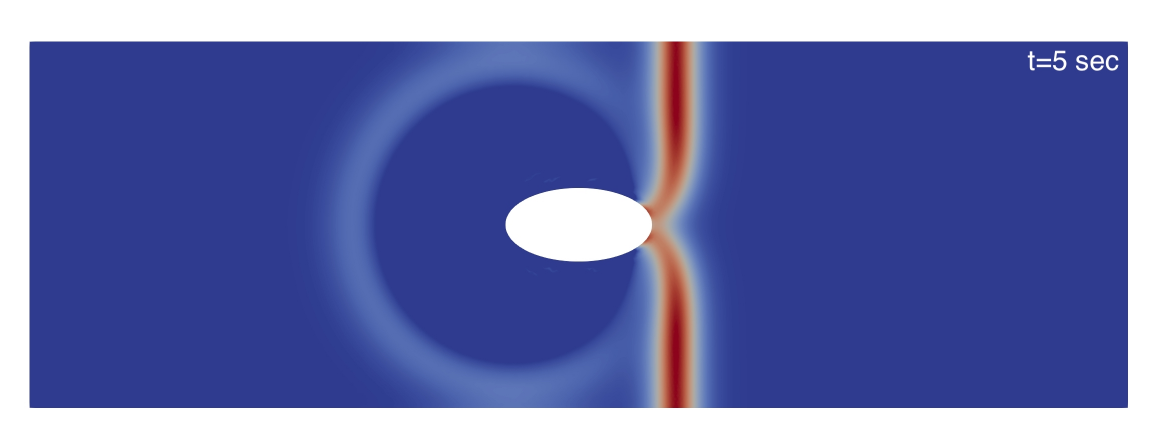} & \includegraphics[width=0.48\columnwidth]{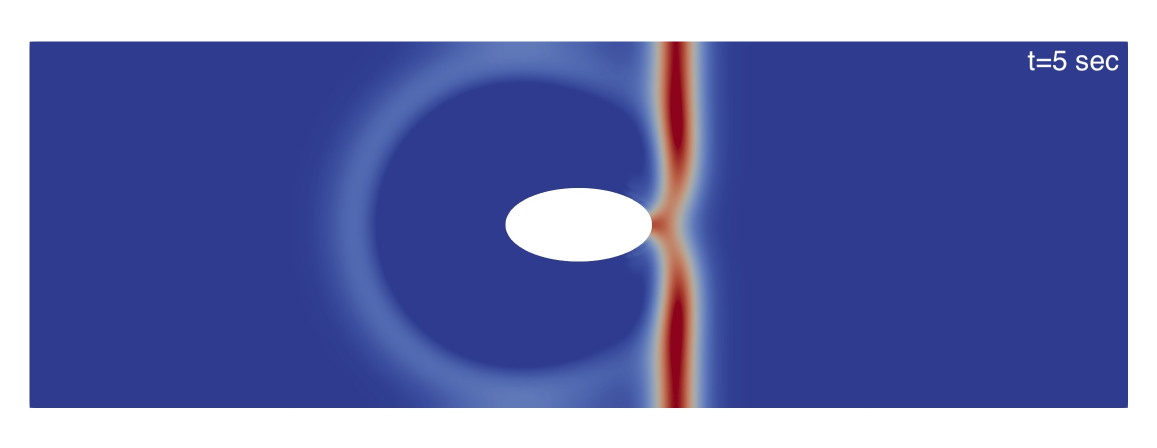} \\
  \includegraphics[width=0.48\columnwidth]{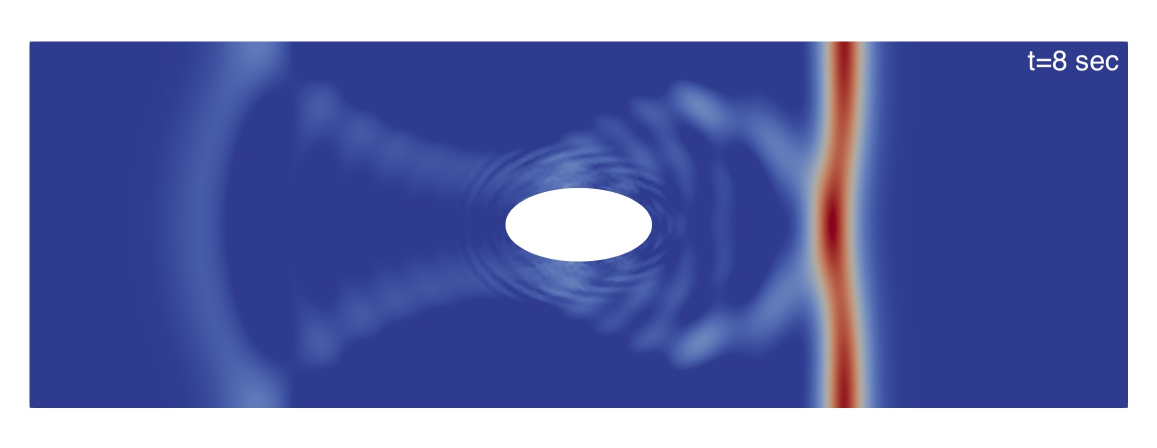} & \includegraphics[width=0.48\columnwidth]{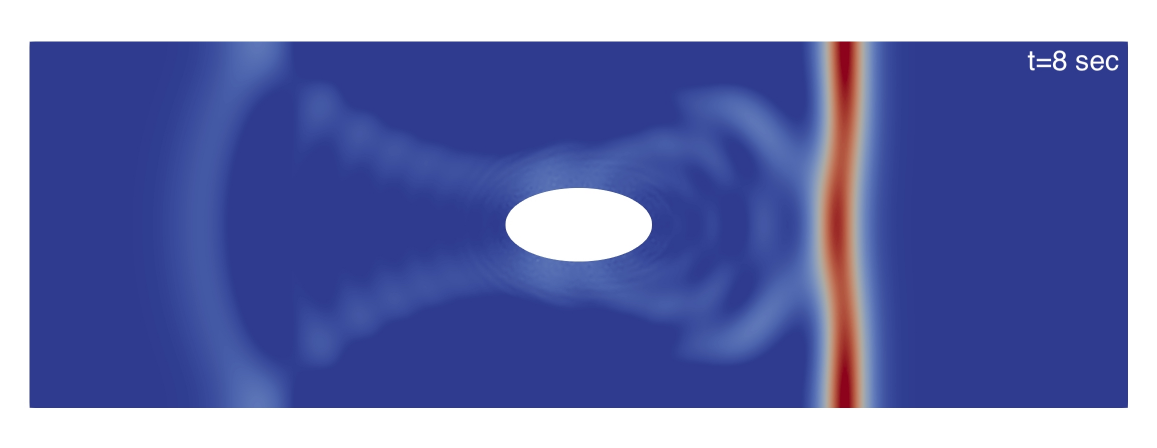} 
  \end{tabular}
  \caption{Top view of the interaction of a solitary wave with a vertical ellipsoidal cylinder. A comparison between the classical BBM-BBM system and the regularized shallow water equations}
  \label{fig:cylinder1}
\end{figure}

The propagation of line solitary waves in the channel requires slip-wall boundary conditions on every part of the boundary. The classical BBM-BBM system (\ref{eq:BBMsys}) is well-posed with no-slip-wall boundary conditions ($\nabla\eta\cdot\bn=0$ and $\bu=0$ on $\partial\Omega$). With such conditions, the line solitary wave sticks to the side-walls and cannot propagate without any change, unless we use for compatibility reasons homogeneous Neumann boundary conditions for both $\eta$ and $\bu$ on the sides of the channel only, \cite{DMS2007}. Such conditions allow line solitary waves to propagate along the channel without change in shape, but they cannot simulate accurately reflections \cite{ADM2009,ADM2010,DMS2007,DMS2009,DMS2010}. In addition to these Neumann boundary conditions for the classical BBM-BBM system we use no-slip-wall boundary conditions on the boundary of the cylinder to compare with the results from the new system. The new regularized shallow water equations can be used naturally with slip-wall boundary conditions applied on every part of the boundary of the domain. (For both systems we take $g=9.81~m/sec^2$.)

For the numerical solution of the classical BBM-BBM system we use the standard Galerkin method which was presented and analyzed in detail in \cite{DMS2007}. For the new system we use the numerical method of Section \ref{sec:numsols}. For the time discretization we employ once again the classical four-stage Runge-Kutta method of order four where we integrate the system until time $T=10~sec$ and with $\Delta t=0.05~sec$. A regular, unstructured mesh of the computational domain with $N_h=72,652$ triangles is considered with $(r,p)=(1,2)$. The common for both systems solitary wave has amplitude $0.04~m$, and is generated numerically using the Petviashvili method of \cite{MRKS2021} adapted appropriately in two dimensions \cite{KMS2019}. During the experiment we record the free surface elevation at three locations (wave gauges): $G_1(-2,0)$, $G_2(0,1)$, $G_3(2,0)$ to measure the runup around the cylinder.

Figure \ref{fig:cylinder1} presents the interaction of the solitary wave with the vertical cylinder. We observe that the slip-wall and no-slip-wall boundary conditions result in different solutions. In particular, we observe that while the slip-wall boundary conditions allow the solitary wave to slide around the obstacle, the no-slip-wall conditions causes a speed reduction of the solitary wave, especially for the parts of the wave close to the cylinder. 
\begin{figure}[H]
  \centering
  \includegraphics[width=0.8\columnwidth]{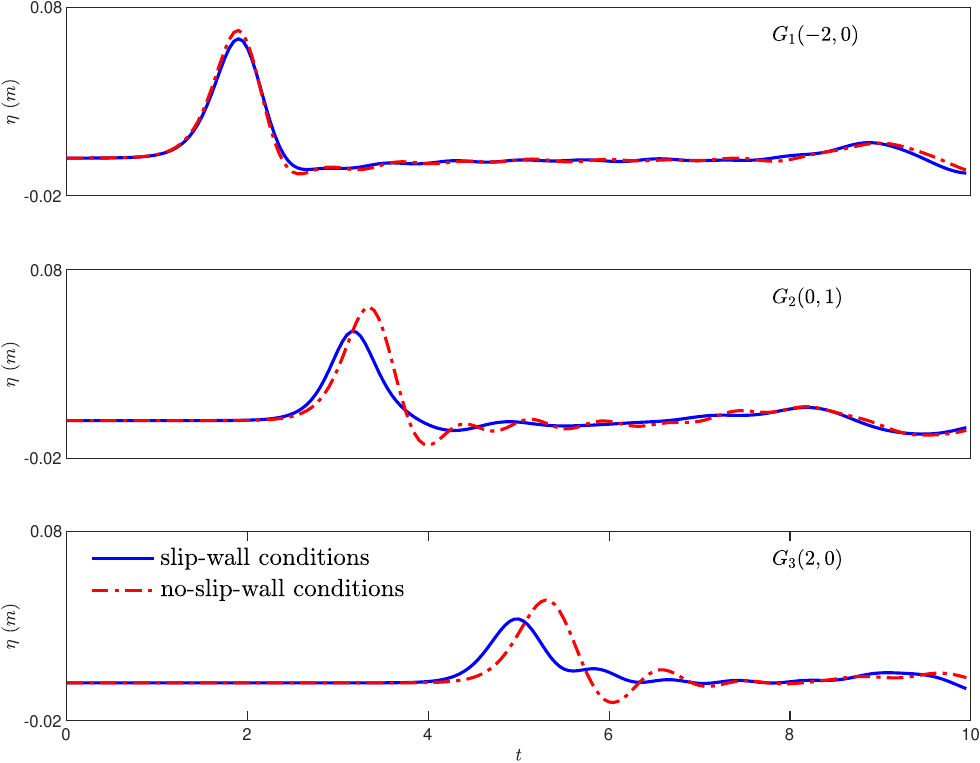}
  \caption{Runup of the solitary wave on vrious locations around the cylinder. Comparison between slip and no-slip conditions}
  \label{fig:runup}
\end{figure}
Figure \ref{fig:runup} presents the recorded values of the solution at the three wave-gauges. The classical BBM-BBM system seems to predict well the runup at the western side of the cylinder. On the other hand, the runup on the north and east sides of the cylinder are not in agreement with the new regularized shallow water system. A delay in the arrival time of the wave is observed due to the no-slip conditions. Considering longer obstacles (longer major axis) one can observe longer delays in the arrival time of the wave on the east side of the cylinder.

\section{Conclusions}

A new Boussinesq system of BBM-BBM type for the propagation of small-amplitude 
long waves has been derived under the smooth bottom variations assumption. 
The new system is appropriate for the study of waves in bounded domains with 
smooth boundary using slip-wall boundary conditions. 
The well-posedness of the specific initial-boundary value problem of the new system was established 
in appropriate Sobolev spaces. Furthermore, a Galerkin / Finite element method was used for the 
semi-discretization of its weak formulation. Nitsche's method for the implicit imposition 
of the boundary conditions was used. The semi-discretization was analyzed theoretically 
by proving the convergence and estimating the errors in appropriate norms. 
The theoretical findings were also validated in practice using appropriate experiments,
and good agreement was found.


\bibliographystyle{plain}
\bibliography{biblio}
\bigskip

\end{document}